\newcommand{\al}{\alpha}
\newcommand{\be}{\beta}
\newcommand{\de}{\delta}
\newcommand{\ep}{\varepsilon}
\newcommand{\ph}{\varphi}
\newcommand{\pa}{\partial}
\newcommand{\Ga}{\Gamma}
\newcommand{\na}{\nabla}
\newcommand{\dis}{\displaystyle}
\newcommand{\R}{\mathbb{R}}
\newcommand{\N}{\mathbb{N}}
\def\coloneqq{\mathrel{\mathop:}=}%
\def\revcoloneqq{=\mathrel{\mathop:}}%
\newcommand{\AC}{{\rm AC\,}}
\theoremstyle{definition}
\newtheorem{Th}{Theorem}[section]
\newtheorem{Prop}[Th]{Proposition}
\newtheorem{Lem}[Th]{Lemma}
\newtheorem{Def}{Definition}
\newtheorem{Rem}{Remark}
\numberwithin{equation}{section}
\title[Equivalence of two weak solutions]{On the equivalence of \\viscosity solutions and distributional solutions\\
 for the time-fractional diffusion equation}
\author{Yoshikazu Giga, Hiroyoshi Mitake, Shoichi Sato}
\address{Graduate School of Mathematical Sciences, University of Tokyo, 3-8-1 Komaba,
Meguroku, Tokyo, Japan 153-8914}
\email{labgiga@ms.u-tokyo.ac.jp, mitake@ms.u-tokyo.ac.jp, shoichi@ms.u-tokyo.ac.jp}
\thanks{
        Y. G. was partially supported by the JSPS grants: KAKENHI \#19H00639, \#18H05323, \#17H01091,   %
                H. M. was partially supported by the JSPS grants: KAKENHI \#19K03580, \#17KK0093, \#20H01816.  
}
\keywords{Caputo time fractional derivatives, Initial-boundary value problems, Fractional diffusion equation, Viscosity solutions, Distoributional solutions.}
\subjclass[2010]{
34K37, 
35D30, 
49L25} 
\date{\today}
\begin{document}
\maketitle

\begin{abstract}
We consider an initial-boundary value problem for the time-fractional diffusion equation.
We prove the equivalence of two notions of weak solutions, viscosity solutions and distributional solutions.
\end{abstract}

\section{Introduction}

Let $T>0$ and $\al \in (0, 1)$ be given constants and $\Omega \subset \R^d$ be a bounded domain with smooth boundary.
We are concerned with the time-fractional diffusion equation: 
\begin{equation} \label{CP}
\left \{
\begin{array}{ll}
d_{t}^{\al}u(x,t) + Lu(x,t) \ =  f(x,t) & \mbox{for} \quad (x,t)\in\Omega \times (0,T), \\
u(x,t) =  0 & \mbox{for} \quad (x,t)\in\pa\Omega \times (0,T), \\
u(x,0) = u_{0}(x) & \mbox{for} \quad x\in \overline{\Omega}.
\end{array}
\right.
\end{equation}
Here, $u:\overline{\Omega} \times [0,T] \rightarrow \R$ is an unknown function, $f:\overline{\Omega} \times [0,T] \rightarrow \R$, $u_0:\overline{\Omega}\to\mathbb{R}$ are given continuous functions and $L$ is a symmetric uniformly elliptic operator of divergence form given by 
\begin{equation*}
Lu(x, t) \coloneqq - \sum_{i, j = 1}^{d} \pa_i (a_{ij}(x) \pa_j u(x, t)), 
\end{equation*}
where $(a_{ij})$ is a given continuous diffusion coefficient satisfying the following assumptions: 
\begin{enumerate}
\item[(A1)]
 The matrix $(a_{ij}(x))_{ij}$ is symmetric, i.e.,
\begin{equation*}
a_{ij}(x) = a_{ji}(x) \quad \mbox{for all} \quad i,j = 1, \dots, d, \ \text{and} \ x \in \Omega. 
\end{equation*}
\item[(A2)]
 The operator $L$ is uniformly elliptic, i.e., there exists a constant $\lambda\in(0,1)$ such that
\begin{equation*}
\lambda|\xi|^2 \leq \sum_{i,j=1}^{d} a_{ij}(x) \xi_i \xi_j
\quad \mbox{for all} \quad \xi \in \R^d \ \text{and} \ x \in \Omega. 
\end{equation*}
\end{enumerate}
Moreover, we denote by $d_{t}^{\al}u$ the \textit{Caputo fractional derivative} of $u$ with respect to $t$, that is, 
\begin{equation*}
d_{t}^{\al}u(x,t) \coloneqq \dis \left( g_{1-\al} \ast \frac{du}{dt} \right)(x,t)
=\int_0^{t}g_{1-\al}(t-s)\frac{du}{dt}(x,s)\,ds
\end{equation*}
for all $(x,t)\in\overline{\Omega}\times(0,T)$, where we write $g_{\be}$ for the Riemann-Liouville kernel, 
\begin{equation*}
\dis g_{\be}(t) \coloneqq \frac{t^{\be-1}}{\Ga(\be)} \quad\text{for} \  t>0 \ \text{and} \ \be >0, 
\end{equation*}
where $\Gamma$ is the Gamma function. 

\medskip
Fractional derivatives attracted great interest from both mathematics and applications within the last few decades, and developed in wide fields (see \cite{MK, NSY, KRY, KST, P} for instance).  Studying differential equations with fractional derivatives is motivated by mathematical models that describe diffusion phenomena in complex media like fractals, which is sometimes called \textit{anomalous diffusion}. It has inspired further research on numerous related topics. 
We refer to a non-exhaustive list of references \cite{SY, ACV, GN, TY, N, KY, CKKW, LTY, CG} and the references therein.  

The well-posedness of the initial-boundary value problem \eqref{CP} was first established by using the Galerkin method in \cite{Z, SY} in the framework of distributional solutions.  
Also, the existence of unique viscosity solutions to \eqref{CP} was established by \cite{GN, N}, and also by \cite{TY} in the whole space. 
It is worth emphasizing that as far as the authors know the relations between two weak solutions, 
a viscosity solution and a distributional solution, are not studied yet. 
The objective of our paper is to prove the equivalence of two notions of weak solutions. 

\subsection{Definitions of two weak solutions} 
Here, we recall the definitions of the viscosity solution and the distributional solution to \eqref{CP}. 

\begin{Def}\label{vissol}
A upper semicontinuous function $u:\overline{\Omega} \times [0, T)\to\mathbb{R}$
is said to be a \textit{viscosity subsolution} of \eqref{CP} if  
for any $\ph \in C^2(\overline{\Omega} \times [0, T])$ one has
\begin{equation*}
d_{t}^{\al} \ph(x_0,t_0) + L\ph (x_0,t_0) \leq \, f(x_0, t_0), 
\end{equation*}
whenever $u-\ph$ attains a local maximum at $(x_0,t_0) \in \Omega \times (0,T)$, $u(x,t) \leq 0$ for all $(x,t)\in\pa \Omega \times (0,T)$, and $u(x,0) \leq u_0(x)$ for all $x\in\overline{\Omega}$.

Similarly, a lower semicontinuous function $u:\overline{\Omega} \times [0, T)\to\mathbb{R}$
is said to be a \textit{viscosity supersolution} of \eqref{CP} if  
for any $\ph \in C^2(\overline{\Omega} \times [0, T])$ one has
\begin{equation*}
d_{t}^{\al} \ph(x_0,t_0) + L\ph (x_0,t_0) \geq \, f(x_0, t_0), 
\end{equation*}
whenever $u-\ph$ attains a local minimum at $(x_0,t_0) \in \Omega \times (0,T)$, $u(x,t) \geq 0$ for all $(x,t)\in\pa \Omega \times (0,T)$, and $u(x,0) \geq u_0(x)$ for all $x\in\overline{\Omega}$.

Finally, we call $u\in C(\overline{\Omega}\times[0,T))$ a \textit{viscosity solution} 
of \eqref{CP} if $u$ is both a viscosity subsolution and supersolution of \eqref{CP}. 
\end{Def}

Thanks to \cite{N} it is known that under (A1), and the assumptions 
\begin{align*}
&\sum_{i,j=1}^{d}a_{ij}(x)\xi_i\xi_j\ge0 \quad\text{for all} \ x\in\overline{\Omega}, \ \xi\in\mathbb{R}^d, \\
&a_{ij}\in C^{1,1}(\overline{\Omega}\times[0,T]), \ f\in C(\overline{\Omega}),  \ \text{and} \ u_0\in C(\overline{\Omega}) \ 
 \text{with} \ u_0=0 \ \text{on} \ \partial\Omega, 
\end{align*}
\eqref{CP} has the unique viscosity solution. 

\medskip
To define the distributional solution we first notice that we have the relation 
(see Lemma \ref{RL=Cap} for the proof) 
\begin{equation} \label{C=RL-0}
d_t^{\al}v(t)=D_t^{\al}(v-v(0))(t) \quad\text{for all} \  t\in(0,T)
\end{equation}
for all $v\in\AC(0,T)$, which denotes the set of all absolutely continuous functions. 
Here, we write $D_t^{\al}v$ for the \textit{Riemann-Liouville fractional derivative} of $v$ with order $\alpha$, that is, 
\begin{equation*}
D_{t}^{\al}v(t) \coloneqq \dis \frac{d}{dt} (g_{1-\al} \ast v)(t). 
\end{equation*}
Thus, we can rewrite \eqref{CP} as 
\begin{equation} \label{RLP} 
\left \{
\begin{array}{ll}
D_{t}^{\al}(u-u_0)(x,t) + Lu(x,t) =  f(x,t) & \mbox{for} \quad (x,t)\in \Omega \times (0,T), \\
u(x,t) =  0 & \mbox{for} \quad (x,t)\in\pa\Omega \times (0,T), \\
u(x,0) = u_{0}(x) & \mbox{for} \quad x\in\overline{\Omega}.
\end{array}
\right.
\end{equation}

Let $H^1(\Omega)$ be the standard Sobolev space, $W^{1,2}(\Omega)$, and 
$H_0^1(\Omega)$ be the space of functions in $H^1(\Omega)$  
that vanish at the boundary in the sense of traces. 
Also, we denote by $L^2(0,T; H_0^1(\Omega))$ the parabolic Sobolev space 
(see \cite{E} for the notation).  
We set 
\begin{equation*}
W^{\al}(u_0) \coloneqq \big\{ u\in L^2(0,T; H_0^1(\Omega)) \mid \, g_{1-\al} \ast (u-u_0) \in {}_{0}H^1(0,T; H^{-1}(\Omega)) \big\},
\end{equation*}
where we denote by ${}_{0}H^1(0,T; H^{-1}(\Omega))$ the set of all functions in $H^1(0,T; H^{-1}(\Omega))$ whose trace on $t=0$ is zero.

\begin{Def}
We call $u$ a \textit{distributional solution} to (\ref{RLP}) if $u \in W^{\al}(u_0)$
and $u$ satisfies 
\begin{multline} \label{wf}
\dis \frac{d}{dt} \int_{\Omega} [g_{1-\al} \ast (u-u_0)](x,t) \ph(x) \,dx
+ \sum_{i,j=1}^{d}\int_{\Omega} a_{ij}(x) \pa_{x_j} u(x,t) \pa_{x_i} \ph(x) \,dx \\
= 
\int_{\Omega}f(x,t)\varphi(x)\,dx
\end{multline} 
for all $\ph \in H_0^1(\Omega)$ and almost every $t \in (0, T)$. 
\end{Def}

This is a special case of the definition which was introduced in \cite{Z}. 
Thanks to \cite{Z,SY}, it is known that under (A1), (A2) and the assumptions 
\[
a_{ij}\in C^1(\overline{\Omega}), f\in L^{\infty}(0,T;L^2(\Omega)), \ \text{and} \ u_0\in L^2(\Omega), 
\]
\eqref{RLP} has the unique distributional solution. 

\subsection{Main Result} 
We state our main result in the paper. 
\begin{Th} \label{Main_Th}
Let $u\in C(\overline{\Omega}\times[0,T))$. 
Assume that (A1), (A2), 
\begin{enumerate}
\item[(A3)]  $a_{ij} \in C^{1, 1}(\overline{\Omega})$, $f \in C(\overline{\Omega} \times [0, T])$,  
\item[(A4)] $u_0\in C(\overline{\Omega})$ with $u_0=0$ on $\partial\Omega$, 
\item[(A5)] $\Omega\subset\mathbb{R}^d$ is a bounded domain with $C^2$-boundary  
\end{enumerate}
hold. Then $u$ is the viscosity solution of \eqref{CP} if and only if $u$ is the distributional solution of \eqref{RLP}. 
\end{Th}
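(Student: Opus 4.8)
The plan is to establish both implications through the uniqueness theorems already available for each notion of solution. Since the excerpt records that, under (A1)--(A5), the Cauchy problem \eqref{CP} has a unique viscosity solution (by \cite{N}) and the problem \eqref{RLP} has a unique distributional solution (by \cite{Z,SY}), it suffices to prove just one inclusion: if I can show that every distributional solution is a viscosity solution, then given a viscosity solution $u$, I compare it with the distributional solution $v$; $v$ is then also a viscosity solution, so by uniqueness $u=v$, whence $u$ is a distributional solution. (Alternatively one proves the reverse inclusion and argues symmetrically.) So the mathematical core is a single comparison between the two solution concepts, and the two uniqueness results do the bookkeeping.

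For the direction ``distributional $\Rightarrow$ viscosity'': let $u$ be the distributional solution, which by the regularity theory for \eqref{RLP} (and the continuity hypothesis $u\in C(\overline\Omega\times[0,T))$ built into the statement) is continuous. I would verify the subsolution property; the supersolution property is symmetric. Take $\varphi\in C^2(\overline\Omega\times[0,T])$ such that $u-\varphi$ has a local maximum at $(x_0,t_0)\in\Omega\times(0,T)$. The boundary and initial inequalities are immediate since $u$ satisfies them with equality. The task is the differential inequality $d_t^\alpha\varphi(x_0,t_0)+L\varphi(x_0,t_0)\le f(x_0,t_0)$. The standard viscosity-solution mechanism would localize: multiply the weak formulation \eqref{wf} by a spatial test function concentrated near $x_0$, but the difficulty is that \eqref{wf} is only an $L^2$-in-$x$, a.e.-in-$t$ identity involving the nonlocal-in-time term $g_{1-\alpha}\ast(u-u_0)$, and a local-in-time pointwise statement must be extracted from it. The key tool is the relation \eqref{C=RL-0} connecting the Riemann--Liouville and Caputo derivatives, together with the nonnegativity/monotonicity properties of the kernel $g_{1-\alpha}$; I would use that $D_t^\alpha$ (equivalently $d_t^\alpha$ acting on the corrected function) is a \emph{nonlocal} operator with a positivity structure that is compatible with comparison arguments — specifically, at an interior maximum of $u-\varphi$ one has $d_t^\alpha u(x_0,t_0)\le d_t^\alpha\varphi(x_0,t_0)$ in the appropriate weak sense because the ``memory'' contribution $\int_0^{t_0}g_{1-\alpha}'(t_0-s)\big[(u-\varphi)(x_0,s)-(u-\varphi)(x_0,t_0)\big]\,ds$ has the right sign.

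The main obstacle, and the step that needs the most care, is reconciling the \emph{spatial regularity} of the distributional solution (which a priori lives only in $L^2(0,T;H^1_0(\Omega))$, so $Lu$ is merely a distribution) with the \emph{pointwise} spatial evaluation required by the viscosity definition. I expect to handle this by testing \eqref{wf} against a nonnegative $\varphi\in H^1_0(\Omega)$ supported near $x_0$ and forming a quotient (an averaged/integrated version of the maximum-principle touching argument), then using the continuity of $u$, $f$, and the coefficients $a_{ij}\in C^{1,1}$ to pass to the limit as the support shrinks; the $C^{1,1}$ regularity of $a_{ij}$ is exactly what lets me integrate by parts to move derivatives off $u$ and onto the smooth test function, converting $\sum_{ij}\int a_{ij}\partial_j u\,\partial_i\varphi$ into a form controlled by $L^*\varphi = L\varphi$ evaluated against $u$. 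For the time direction, the subtlety is that \eqref{wf} holds only for a.e.\ $t$, so to obtain the inequality at the specific touching time $t_0$ I would first establish it for a.e.\ $t$ and then upgrade to all $t_0$ using continuity in $t$ of both sides (which follows from $u\in C$, $\varphi\in C^2$, and the representation of $d_t^\alpha$). Once this localization and limiting procedure is carried through for subsolutions, the supersolution case is verbatim with reversed inequalities, and the uniqueness-based bootstrap described above closes the equivalence.
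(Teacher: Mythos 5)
Your overall bookkeeping is the same as the paper's: only one inclusion needs a direct proof, and the other follows from the two uniqueness theorems. But you chose to prove directly the inclusion ``distributional $\Rightarrow$ viscosity,'' and that is exactly the part your proposal does not actually carry out. The two crucial steps are left as intentions rather than arguments. First, the claim that at an interior maximum of $u-\varphi$ one has $d_t^\alpha u(x_0,t_0)\le d_t^\alpha\varphi(x_0,t_0)$ presupposes that $d_t^\alpha u(x_0,\cdot)$ makes sense, i.e.\ time regularity of $u$ that a distributional solution (which only lies in $L^2(0,T;H_0^1(\Omega))$ with $g_{1-\alpha}\ast(u-u_0)\in{}_0H^1(0,T;H^{-1}(\Omega))$) simply does not have; the viscosity inequality must be derived for $\varphi$ alone from the weak identity \eqref{wf}, and you give no mechanism for that. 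Second, the spatial localization is not a matter of integrating by parts using $a_{ij}\in C^{1,1}$: testing \eqref{wf} with a concentrating nonnegative test function produces the term $\sum_{ij}\int a_{ij}\partial_j u\,\partial_i\varphi_\varepsilon\,dx$, which involves $\nabla u$, not the touching function $\varphi$, and the fact that $u-\varphi$ has a maximum at the single point $(x_0,t_0)$ gives no sign information after integration in $x$ (and \eqref{wf} holds only for a.e.\ $t$, while $t\mapsto\int a_{ij}\partial_j u(\cdot,t)\partial_i\varphi_\varepsilon\,dx$ has no known continuity). Making this direction work is the genuinely hard content of papers such as \cite{I,JLM,JJ}, which rely on mollification/comparison arguments or regularity theory, none of which you supply. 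There is also a second gap in your bootstrap itself: given a viscosity solution $u$, you compare it with ``the'' distributional solution $v$ and want $v$ to be a viscosity solution, but the viscosity definition requires $v\in C(\overline\Omega\times[0,T))$, and continuity of the weak solution under (A1)--(A5) is a regularity theorem you neither prove nor cite (the continuity hypothesis in the statement applies to the given $u$, not to the auxiliary $v$).

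The paper proves the opposite inclusion directly, precisely to avoid these issues: starting from the (continuous) viscosity solution $u$, it builds the discrete-in-time approximations $U_m^h$ of \eqref{AppEq}, the piecewise linear interpolant $u^h$ of \eqref{PL}, proves uniform convergence $u^h\to u$ (Theorem \ref{convisPC}), establishes the energy estimates of Section \ref{sec:EE} to get weak compactness in $L^2(0,T;H_0^1(\Omega))$, and uses the kernel approximation $g_{3-\alpha}^h$ of Section \ref{sec:KA} to show the error term $e^h$ of \eqref{error_term} vanishes in the weak sense (Lemma \ref{error_weakconv}), so that one can pass to the limit in the weak formulation and conclude $u$ is a distributional solution. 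The converse direction is then the soft one, obtained from uniqueness of distributional solutions applied to the viscosity solution $\tilde u$, which is continuous by definition. If you want to keep your direction, you would need to either prove a continuity/regularity theorem for the distributional solution and a genuine weak-to-viscosity localization argument in the spirit of \cite{I}, adapted to the nonlocal Caputo term; as written, the proposal names the obstacles but does not overcome them.
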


\medskip
Let us briefly describe our approaches to get Theorem \ref{Main_Th}. First it is worth emphasizing that in general the notion of viscosity solutions is based on the comparison principle, while the notion of distributional solutions is based on the variational principle. Since two notions of weak solutions are introduced in totally different manners, it is highly nontrivial whether two notions are same in the settings under (A1)--(A4) or not. 
In our approach we use the discrete scheme for time fractional diffusion equations which was introduced in \cite{GLM}. This scheme can be regarded as a resolvent-type approximation (see \cite{GLM} for details). 
In \cite{GLM}, in a rather general setting, it is proved that an approximated solution uniformly converges to a viscosity solution to \eqref{CP}. In this paper, we modify this approximation to make it an absolutely continuous function and prove that it converges to a distributional solution to \eqref{RLP} in a suitable norm. 

A main difficulty is in proving that the error term which comes from the approximated solution and the distributional solution converges to zero in a suitable weak sense. 
Our approach here is to introduce an approximation of kernel $g_{1-\alpha}$ in consideration of the discrete scheme, which is our key gradient of our paper. 
Due to the discrete scheme and kernel approximation \eqref{app:kernel}, we can get the precise error estimate which enables us  to get our main theorem, Theorem \ref{Main_Th}. 

\medskip
We conclude this introduction by giving a non exhaustive list of related works to our paper. 
The regularity of solutions to a space-time nonlocal equation with Caputo's time fractional derivative is studied in \cite{ACV, CG}. 
The large time behavior of viscosity solution to Hamilton--Jacobi equations with Caputo time derivative under the periodic boundary condition is studied. 
The well-posedness and the representation to the weak solution are given in \cite{CKKW} from the probabilistic point of view. 

Results on various analytical aspects of time-fractional diffusion equations are summarized in \cite{ZZ}.
For example, recent developments towards the De Giorgi-Nash-Moser theory, and the large time behavior of distributional solutions are surveyed.
The large time behavior of distributional solutions of the evolution equation with Caputo time derivative in a bounded domain is studied in \cite{DVV}.
Since this approach is energy estimates in an abstract framework, the spatial operator is not limited to a linear elliptic differential operator but can be also taken as fractional elliptic operators as well as nonlinear elliptic operators.

We also give several results on the equivalence of two notions of weak solutions. 
In \cite{L, I} the equivalence is studied for linear degenerate elliptic equations. 
In \cite{JLM, JJ}, it is studied for  $p$-Laplace equations. 

\bigskip
This paper is organized as follows. 
In Section \ref{sec:DS}, we recall the discrete scheme introduced in \cite{GLM} and modify it to adjust it for our purpose. 
In Section \ref{sec:KA}, we introduce a kernel approximation and Section \ref{sec:EE} is devoted to give the energy estimate which is a key ingredient of the paper. 
We finally give a proof of Theorem \ref{Main_Th} in Section \ref{sec:main-proof}.

\section{The discrete scheme}\label{sec:DS}
In this section, we first quickly recall the definition of the discrete scheme which was first introduced in \cite{GLM}.
Let $T > 0, M \in \N$ and set $h \coloneqq T/M$.
Note that
\begin{equation*}
d_{t}^{\al}u(mh) =
\dis \int_{0}^{mh} g_{1-\al}(mh-s) \frac{du}{ds}(s) \,ds =
\dis \sum_{k=0}^{m-1} \int_{kh}^{(k+1)h} g_{1-\al}(mh-s) \frac{du}{ds}(s) \,ds
\end{equation*}
for any function $u \in \AC([0, T])$, and $m \in \mathbb{N}$.
If $u$ is a smooth function in $\Omega \times (0, T)$ and $h$ is sufficiently small, then we are able to approximate as 
\begin{equation*}
\dis \int_{kh}^{(k+1)h} g_{1-\al}(mh-s) \frac{du}{ds}(s) \,ds
\fallingdotseq
\dis \int_{kh}^{(k+1)h} g_{1-\al}(mh-s) \frac{u((k+1)h)-u(kh)}{h} \,ds.
\end{equation*}
Here, we have
\begin{align*}
\dis \int_{kh}^{(k+1)h} g_{1-\al}(mh-s) \,ds &= g_{2-\al}((m-k)h) - g_{2-\al}((m-k-1)h) \\
&= (g_{2-\al}(m-k) - g_{2-\al}(m-k-1))h^{1-\al} \\
&= \psi(m-k)h^{1-\al},
\end{align*}
where we set 
\begin{equation*}
\psi(r) \coloneqq g_{2-\al}(r) - g_{2-\al}(r-1) \quad \mbox{for} \quad r \geq 1.
\end{equation*}
Note that $\psi'(r)\le0$ for all $r\ge1$. 
With this observation, we heuristically have 
\begin{align}
d_{t}^{\al}u(mh) &\fallingdotseq \dis \frac{1}{h^{\al}} \sum_{k=0}^{m-1} \psi(m-k) (u((k+1)h) - u(kh)) \nonumber\\
&= \dis \frac{1}{\Ga(2-\al)h^{\al}} \left\{ u(mh) - \sum_{k=0}^{m-1} C_{m,k}u(kh) \right\}, 
\label{def:dCD}
\end{align}
where
\begin{equation*}
C_{m,k} \coloneqq 
\left\{
\begin{array}{ll}
\Ga(2-\al)\psi(m) \quad & \mbox{for} \quad k = 0 \\
\Ga(2-\al)(\psi(m-k) - \psi(m-(k-1))) \quad &\mbox{for} \quad k = 1, \ldots,m-1.
\end{array}
\right. 
\end{equation*}
Set $\widetilde{M}:=\{0,1, \cdots, M \}$. 
It is important to notice here that we have 
\[
C_{m,k}\ge 0 \quad \text{for all} \ k=0,1,\ldots,m-1, \ \text{and} \ m\in\widetilde{M}\setminus\{0\},  
\]
since $\psi$ is non-increasing. 

Take
\begin{equation} \label{ini_app}
\displaystyle U_0^h \in C_c^{\infty}(\Omega) \cap C(\overline{\Omega}) \, \, \mbox{so that} \, \, \sup_{\overline{\Omega}} |U_0^h - u_0| \rightarrow 0 \, \, \mbox{as} \, \, h \rightarrow 0.
\end{equation}
For $m \in \widetilde{M}\setminus\{0\}$, we inductively define a family of functions $\{U_m^h\}_{m\in\widetilde{M}} \subset C(\overline{\Omega})$ by the viscosity solutions of 
\begin{equation} \label{AppEq}
\left \{
\begin{array}{rcll}
\dis \frac{1}{\Ga(2-\al)h^{\al}} \left\{ u - \sum_{k=0}^{m-1} C_{m,k}U_k^h \right\} + Lu & = & f(\cdot, mh) & \quad \mbox{in} \,\, \Omega, \\
u & = & 0 & \quad \mbox{on} \,\, \pa\Omega. \\
\end{array}
\right.
\end{equation} 
We notice that, by \cite[Theorem 1]{I}, $U_m^h$ is the distributional solution to \eqref{AppEq} for each $m\in\widetilde{M}\setminus\{0\}$. Moreover 
since $f\in C(\overline\Omega)\subset L^2(\Omega)$, we have $\{U_m^h\}_{m\in\widetilde{M}} \subset H^2(\Omega)\cap C(\overline{\Omega})$ (see \cite{E} for instance).  
Henceforth, by abuse of notation, we write $U_m$ for $U_m^h$.

Based on \cite{GLM}, we define the function $u_c^h:\overline{\Omega} \times [0,T+h] \rightarrow \R$ by
\begin{equation}\label{PC}
u_c^h(x,t) \coloneqq U_m(x) \quad \mbox{for each} \  x \in \overline{\Omega}, \  t \in [mh,(m+1)h), \  m \in \widetilde{M}. 
\end{equation}
Notice that $u_c^h(x, \cdot)$ is clearly piecewise continuous on $[0, T+h]$ for all $ x \in \overline{\Omega}$.
\begin{Th} \label{convisPC}
Let $u_c^h$ be the function given by (\ref{PC}) for any $h>0$. We have $u_{c}^{h} \rightarrow u$ uniformly in $\overline{\Omega} \times [0,T]$ as $h \rightarrow 0$, where $u$ is the unique viscosity solution to (\ref{CP}).
\end{Th}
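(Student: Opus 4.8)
The plan is to establish the convergence $u_c^h \to u$ by identifying $u_c^h$ (or a suitable modification of it) as a scheme whose convergence is already known from \cite{GLM}, and then invoking uniqueness of the viscosity solution to \eqref{CP}. The first step is to note that the operator $u \mapsto \frac{1}{\Ga(2-\al)h^\al}\{u - \sum_{k=0}^{m-1} C_{m,k} U_k\} + Lu$ with zero boundary data, interpreted in the viscosity sense, is a proper (degenerate elliptic, in fact uniformly elliptic) operator with an order-zero term of positive sign $\frac{1}{\Ga(2-\al)h^\al}$, so each stationary problem \eqref{AppEq} has a unique continuous viscosity solution $U_m$ by the comparison principle for second-order elliptic equations; this makes the inductive definition of $\{U_m\}$ legitimate. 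One should also record the discrete comparison principle: if $\{V_m\}$, $\{W_m\}$ are discrete sub/supersolutions with $V_0 \le W_0$, then $V_m \le W_m$ for all $m$, which follows because the coefficients $C_{m,k}$ are nonnegative (as emphasized in the text, using that $\psi$ is non-increasing) and sum appropriately.

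Next I would set up a uniform bound and the discrete stability estimate. Using that $\sum_{k=0}^{m-1} C_{m,k} = 1$ (which should be checked from the telescoping definition of $C_{m,k}$ and $\psi$, together with $\psi(m) \to$ the right boundary value), one compares $\{U_m\}$ with the constant-in-space barriers built from $\|u_0\|_\infty$ and $\|f\|_\infty$ — more precisely, functions of the form $A + B\,\sigma(mh)$ where $\sigma$ solves a discrete fractional ODE, or simply spatial barriers $\pm(C_1 + C_2 w(x))$ with $w$ solving $Lw = 1$, $w|_{\pa\Omega}=0$. This yields $\sup_{m,h}\|U_m\|_{L^\infty(\overline\Omega)} < \infty$ and hence equi-boundedness of $u_c^h$. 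Then I would invoke the abstract convergence result of \cite{GLM}: the half-relaxed limits $\overline u = \limsup^* u_c^h$ and $\underline u = \liminf_* u_c^h$ are respectively a viscosity subsolution and supersolution of \eqref{CP} — this is exactly where the consistency of the discrete fractional difference quotient \eqref{def:dCD} with $d_t^\al$ is used (for a smooth test function $\ph$, plugging $\ph$ into the scheme at a near-maximum point and passing $h\to 0$ reproduces $d_t^\al\ph + L\ph \le f$), and monotonicity of the scheme (nonnegativity of $C_{m,k}$) is what makes the half-relaxed limit argument work. The initial condition is handled by the barrier from \eqref{ini_app} together with a modulus-of-continuity barrier for $u$ near $t=0$.

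Given $\overline u$ a subsolution and $\underline u$ a supersolution with $\overline u \ge \underline u$ by construction, the comparison principle for \eqref{CP} — available under (A1)--(A5), e.g.\ from \cite{N} or \cite{GLM} — forces $\overline u \le \underline u$, hence $\overline u = \underline u =: u$ is continuous and is \emph{the} unique viscosity solution. Equality of the half-relaxed limits is precisely equivalent to locally uniform convergence $u_c^h \to u$; since the domain $\overline\Omega\times[0,T]$ is compact and the limit is continuous, this upgrades to uniform convergence, which is the claim.

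The main obstacle I anticipate is the consistency/stability analysis at the discrete level — specifically, verifying carefully that the weights satisfy $C_{m,k}\ge 0$ and $\sum_k C_{m,k} = 1$ and that the scheme \eqref{AppEq} is genuinely monotone as a map on $\{U_k\}_{k<m}$, and then making the half-relaxed-limit argument of \cite{GLM} go through at points $t_0$ that are not grid points (so that one must interpolate and control the error between $u_c^h(\cdot,t_0)$ and the relevant $U_m$, $U_{m+1}$). A secondary subtlety is the treatment of the parabolic boundary $\{t=0\}\cup(\pa\Omega\times(0,T))$: near $t=0$ one needs the initial-layer barrier quantified by \eqref{ini_app}, and near $\pa\Omega$ one uses that each $U_m$ vanishes on $\pa\Omega$ together with a boundary barrier (here (A5), the $C^2$ regularity of $\pa\Omega$, is used to build a smooth local barrier). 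Once these are in place, the rest is a standard Barles--Souganidis-type argument, and I expect it to be essentially a reprise of \cite{GLM} adapted to the present stationary elliptic sub-problems.
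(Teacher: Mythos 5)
Your proposal follows essentially the same route as the paper: half-relaxed limits of $u_c^h$ shown to be sub/supersolutions via the monotonicity coming from $C_{m,k}\ge 0$ and the consistency argument of \cite{GLM}, with the Dirichlet boundary condition handled by a spatial barrier $Kw+\ep$ built from the solution of $Lw=\|f\|_{L^\infty}$ (the paper's choice; your $Lw=1$ is the same up to a constant) and the discrete comparison principle, followed by comparison for \eqref{CP} to conclude uniform convergence. This matches the paper's sketch, which likewise defers the interior consistency step to \cite[Theorem 1.1]{GLM} and only elaborates the boundary barrier argument.
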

The proof of Theorem \ref{convisPC} is similar to that of \cite[Theorem 1.1]{GLM},
but we need to take the Dirichlet boundary condition into account.
We give a sketch of the proof here. 

\begin{proof}
We can easily see that $u_{c}^{h}$ is uniformly bounded on $\overline{\Omega} \times [0,T]$, and 
we denote by $\overline{u}, \underline{u}$ the half-relaxed limits of $u_c^h$, that is,  
\begin{equation*}
\begin{array}{lcc}
\overline{u}(x,t) & \coloneqq & \dis \lim_{\de \rightarrow 0} \sup \{u_c^h(y, s) \mid  |x-y|+|t-s| \leq \de, (y, s) \in \overline{\Omega} \times [0,T], 0 < h \leq \de \}, \\
\underline{u}(x,t) & \coloneqq & \dis \lim_{\de \rightarrow 0} \inf \{u_c^h(y, s) \mid  |x-y|+|t-s| \leq \de, (y, s) \in \overline{\Omega} \times [0,T], 0 < h \leq \de \}
\end{array}
\end{equation*}
for all $(x,t) \in \overline{\Omega} \times [0,T]$. 

We prove that $\overline{u}$ is a viscosity subsolution to \eqref{CP} here, and we can similarly prove that  
$\underline{u}$ is a viscosity supersolution to \eqref{CP}. 
We only prove that $\overline{u} \leq 0 $ on $\pa \Omega \times [0, T)$.

Let $w$ be the classical solution to the boundary value problem: 
\begin{equation*}
\left \{
\begin{array}{rcll}
Lw & = & \|f\|_{L^{\infty}} & \quad \mbox{in} \quad \Omega,\\
w & = & 0 & \quad \mbox{on} \quad \pa \Omega.
\end{array}
\right.
\end{equation*} 
By the maximum principle, we have $w(x)>0$ for all $x\in\Omega$. 
Fix $\ep > 0$, and taking $K > 0$ large enough, we have
\begin{equation*}
U_0^h(x) \leq Kw(x) + \ep \quad \mbox{for all} \quad x\in\overline{\Omega} \quad \mbox{and} \quad 0 < h < h_0
\end{equation*}
for some $h_0\in(0,1)$. 
We set $W(x,t) \coloneqq Kw(x)+ \ep$ for any $(x, t) \in \overline{\Omega} \times [0,T]$.
We have 
\begin{align*}
& \dis \frac{1}{\Ga(2-\al)h^{\al}} \left\{ W(\cdot,mh) - \sum_{k=0}^{m-1} C_{m,k}W(\cdot,kh) \right\} + LW(\cdot,mh) - f(\cdot,mh) \\
&=K\|f\|_{L^{\infty}}(1+(mh)^{\al}) - f(\cdot,mh) \geq 0.
\end{align*}

Since $C_{m,k}\ge0$ for all $k=0,1,\ldots,m$, and $m\in\widetilde{M}\setminus\{0\}$, we can easily see that the scheme is monotone by iterating the comparison principle for \eqref{AppEq}, which implies 
\[
U_m\le W(\cdot,mh)\quad\text{on} \ \overline{\Omega} \quad \text{for all} \ m\in\widetilde{M}. 
\]
Thus, we get 
$u_c^h(x, t) \leq W(x, t)$
for all $(x, t) \in \overline{\Omega} \times [0,T]$ and $h > 0$.
Therefore, we obtain
\begin{equation*}
\overline{u}(x, t) \leq W(x, t) = \ep
\end{equation*}
for all $(x, t) \in \pa \Omega \times [0,T)$.
Since $\varepsilon>0$ is arbitrary, we conclude that $\overline{u}\le 0$ on the boundary $\partial\Omega$. 

The rest of the proof is similar to that of \cite[Theorem 1.1]{GLM}, so we omit it. 
\end{proof}


\medskip

We use the following identity, which is well-known (see \cite[Lemma A.1]{KRR}). 
We give the proof for completeness. 

\begin{Lem} \label{RL=Cap}
Let $u \in W^{1, 1}(0, T)$. For all $t \in (0, T)$, we have
\begin{enumerate}
\item[(i)] $ \dis \int_0^t g_{1-\al}(t-s) (u(s)-u(0)) \,ds = \int_0^t g_{2-\al}(t-s) \frac{du}{ds}(s) \,ds$, 
\item[(ii)]$ \dis d_t^\al u(t) = D_t^\al(u-u(0))(t) $.
\end{enumerate}
\end{Lem}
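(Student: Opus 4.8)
The plan is to prove (i) by an integration-by-parts / Fubini argument, and then deduce (ii) as an almost immediate consequence. For (i), I would start from the right-hand side and write $g_{2-\al}(t-s) = \int_s^t g_{1-\al}(t-\sigma)\,d\sigma$, which follows from $\frac{d}{dr}g_{2-\al}(r) = g_{1-\al}(r)$ (valid since $2-\al>1$, so $g_{2-\al}$ is $C^1$ and vanishes at $0$). Substituting this into $\int_0^t g_{2-\al}(t-s)\,\frac{du}{ds}(s)\,ds$ and exchanging the order of integration over the region $\{0<s<\sigma<t\}$ via Fubini (justified because $u\in W^{1,1}$ makes $\frac{du}{ds}\in L^1$ and $g_{1-\al}$ is integrable on bounded intervals, so the double integral is absolutely convergent), the inner $s$-integral becomes $\int_0^\sigma \frac{du}{ds}(s)\,ds = u(\sigma)-u(0)$ by absolute continuity. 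This yields $\int_0^t g_{1-\al}(t-\sigma)(u(\sigma)-u(0))\,d\sigma$, which is exactly the left-hand side of (i).

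For (ii), observe that the left-hand side of (i) is $\big(g_{1-\al}\ast(u-u(0))\big)(t)$ and the right-hand side is $\big(g_{2-\al}\ast \frac{du}{dt}\big)(t)$. Differentiating both sides in $t$: the right side becomes $\frac{d}{dt}\big(g_{2-\al}\ast\frac{du}{dt}\big)(t)$, and since $g_{2-\al}(0)=0$ and $g_{2-\al}'=g_{1-\al}$ one checks that this equals $\big(g_{1-\al}\ast\frac{du}{dt}\big)(t) = d_t^\al u(t)$; the left side is by definition $D_t^\al(u-u(0))(t)$. Equating gives (ii). Alternatively, one can avoid differentiating a convolution of an $L^1$ function by noting that (i) already exhibits $g_{1-\al}\ast(u-u(0))$ as an absolutely continuous function of $t$ (being $g_{2-\al}\ast\frac{du}{dt}$ with $\frac{du}{dt}\in L^1$), whose a.e.\ derivative is computed directly.

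The only delicate point is the justification of Fubini and the handling of the mild singularity of $g_{1-\al}$ at the origin (the exponent $-\al\in(-1,0)$), but this is routine: on $[0,T]$ the kernel $g_{1-\al}$ is in $L^1$, so Tonelli applied to $|g_{1-\al}(t-\sigma)|\cdot|\frac{du}{ds}(s)|$ over $\{0<s<\sigma<t\}$ gives finiteness, licensing the interchange. I do not anticipate any real obstacle here; the lemma is a standard manipulation recorded for completeness.
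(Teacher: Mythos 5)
Your proposal is correct and follows essentially the same route as the paper: the Fubini interchange starting from $g_{2-\al}(t-s)=\int_s^t g_{1-\al}(t-\sigma)\,d\sigma$ is just the integration by parts the paper performs (using $g_{2-\al}'=g_{1-\al}$, $g_{2-\al}(0)=0$, and the absolute continuity of $u$), and (ii) is obtained in both cases by differentiating the identity in (i), your remark that $g_{1-\al}\ast(u-u(0))=g_{2-\al}\ast\frac{du}{dt}$ is absolutely continuous being a slightly more careful justification of the step the paper states directly.
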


\begin{proof}
Noting that $g_{2-\al}^{\prime}(t) = g_{1-\al}(t)$, we have
\begin{align*}
\dis \int_0^t g_{1-\al}(t-s) (u(s)-u(0)) \,ds &= \int_0^t (-\frac{d}{ds}g_{2-\al}(t-s)) (u(s)-u(0))\,ds \\
&= \dis \int_0^t g_{2-\al}(t-s) \frac{du}{ds}(s) \,ds
\end{align*}
for all $t \in (0, T)$, which proves (i). We also have
\begin{equation*}
 D_t^\al(u-u(0))(t) = \frac{d}{dt}(g_{1-\al} \ast (u - u(0))) = g_{1-\al} \ast \frac{du}{dt} = d_t^\al u(t).
\end{equation*}
\end{proof}

Lemma \ref{RL=Cap} requires that $u$ needs to be absolutely continuous on $[0, T]$.
For this purpose, we modify $u_c^h$ as follows.
Define the function $u^h:\overline{\Omega} \times [0,T+h] \rightarrow \R$ by
\begin{equation}\label{PL}
u^h(x,t) \coloneqq U_m(x) + \dis \frac{U_{m+1}(x) - U_m(x)}{h} (t-mh) \quad
\end{equation}
for all $x\in\overline{\Omega}$, $t \in [mh,(m+1)h)$, and $m\in\widetilde{M}$. 
Clearly, the function $u^h(x,\cdot)$ is absolutely continuous on $[0,T+h]$ for each $x\in\overline{\Omega}$, and  
\begin{equation}\label{conv:uh}
u^h\to u \quad\text{uniformly on} \ \overline{\Omega} \times [0,T] \ \text{as} \ h\to0 
\end{equation}
by Theorem \ref{convisPC}.

Our goal is to prove that $u$ is a distributional solution to \eqref{RLP}.

\begin{Prop} \label{u^h_eq}
Let $u^h$ be the function given by \eqref{PL}. Then, we have
\begin{equation} \label{APE}
D_t^\al(u^h - U_0^h)(t) + Lu^h(t) = f(t) + e^h(t) \quad \mbox{for all} \, \, t \in (0, T+h),
\end{equation}
where we define the error term $e^h : \overline{\Omega} \times [0, T+h) \rightarrow \R$ by
\begin{equation} \label{error_term}
e^h(t) \coloneqq d_t^{\al}u^h(t) - d_t^{\al}u^h(mh) + Lu^h(t) - Lu^h(mh) - (f(t) - f(mh))
\end{equation}
for any $t \in [mh, (m+1)h)$ and $m \in \widetilde{M}$.
\end{Prop}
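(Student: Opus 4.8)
The plan is to reduce \eqref{APE} to the fact, already established, that the discrete scheme \eqref{AppEq} holds at the grid points. For each fixed $x\in\overline\Omega$ the function $u^h(x,\cdot)$ is continuous and piecewise linear on $[0,T+h]$, hence absolutely continuous, and by \eqref{PL} one has $u^h(x,0)=U_0(x)=U_0^h(x)$; therefore Lemma \ref{RL=Cap}(ii) applies and gives
\[
D_t^\al(u^h-U_0^h)(t)=d_t^\al u^h(t)\qquad\text{for all }t\in(0,T+h).
\]
Inserting this together with the definition \eqref{error_term} of $e^h$ into \eqref{APE} and cancelling the terms $d_t^\al u^h(t)$, $Lu^h(t)$ and $f(t)$ common to both sides, one sees that \eqref{APE} is \emph{equivalent} to the identity, in $L^2(\Omega)$,
\[
d_t^\al u^h(mh)+Lu^h(mh)=f(\cdot,mh)\qquad\text{whenever }t\in[mh,(m+1)h),\ m\in\widetilde{M};
\]
that is, $e^h$ is by construction exactly the mismatch created by ``freezing'' the equation at the left endpoint $t=mh$ of each subinterval.

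For $m\ge1$ I would verify this endpoint identity by computing $d_t^\al u^h(mh)$ explicitly. Since $u^h(x,\cdot)$ is the continuous piecewise-linear interpolant of the grid values $U_k(x)$, its $t$-derivative on $(kh,(k+1)h)$ is the constant $\bigl(U_{k+1}(x)-U_k(x)\bigr)/h$, so, using the elementary computation of $\int_{kh}^{(k+1)h}g_{1-\al}(mh-s)\,ds$ made just before \eqref{def:dCD},
\begin{align*}
d_t^\al u^h(mh)&=\sum_{k=0}^{m-1}\Bigl(\int_{kh}^{(k+1)h}g_{1-\al}(mh-s)\,ds\Bigr)\frac{U_{k+1}-U_k}{h}\\
&=\frac1{h^\al}\sum_{k=0}^{m-1}\psi(m-k)\,\bigl(U_{k+1}-U_k\bigr).
\end{align*}
The key point is that the only approximation entering \eqref{def:dCD} was the replacement of $du/ds$ by a difference quotient, which for the piecewise-linear $u^h$ is an \emph{exact} equality; hence the summation by parts carried out there holds verbatim and yields, with no error,
\[
d_t^\al u^h(mh)=\frac1{\Ga(2-\al)h^\al}\Bigl\{U_m-\sum_{k=0}^{m-1}C_{m,k}U_k\Bigr\}.
\]
Since $f(\cdot,mh)\in C(\overline\Omega)\subset L^2(\Omega)$, the function $U_m\in H^2(\Omega)$ solves \eqref{AppEq} a.e.\ in $\Omega$, and $u^h(mh)=U_m$; adding $Lu^h(mh)=LU_m$ to the last display gives precisely $d_t^\al u^h(mh)+Lu^h(mh)=f(\cdot,mh)$. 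This proves \eqref{APE} on $[h,T+h)$.

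The remaining case is the first subinterval, $m=0$ and $t\in(0,h)$, on which no equation is imposed on $U_0^h$. There $u^h(x,t)=U_0^h(x)+\bigl(U_1(x)-U_0^h(x)\bigr)t/h$ is completely explicit, so $D_t^\al(u^h-U_0^h)(t)$ and $Lu^h(t)$ can be written down directly, and one records the resulting value of $D_t^\al(u^h-U_0^h)+Lu^h-f$ on $(0,h)$, which is what \eqref{error_term} amounts to there. I expect this first-interval bookkeeping to be the only genuinely delicate point: the formula \eqref{error_term} tacitly uses the endpoint identity at $t=mh$, which is vacuous for $m=0$, so one should observe separately that \eqref{AppEq} with $m=1$ forces $\|U_1-U_0^h\|_{L^2(\Omega)}\lesssim h^{\al}$ — exactly the bound that makes this boundary layer harmless in the energy estimate of Section \ref{sec:EE}. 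Apart from that, the argument is the telescoping of the first paragraph combined with the exact-interpolation computation of the second.
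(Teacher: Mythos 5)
For the grid intervals with $m\ge 1$ your argument is exactly the paper's: you observe that the difference quotient is the exact derivative of the piecewise--linear interpolant, compute $d_t^\al u^h(mh)=\frac{1}{\Ga(2-\al)h^\al}\{U_m-\sum_{k=0}^{m-1}C_{m,k}U_k\}=-LU_m+f(\cdot,mh)$ from \eqref{AppEq}, and convert $d_t^\al u^h$ into $D_t^\al(u^h-U_0^h)$ via Lemma \ref{RL=Cap}; the paper does precisely this and then notes that \eqref{APE} is the definition \eqref{error_term} rearranged. So the core of your proof is correct and coincides with the paper's.

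Your discussion of the first interval is the one place where you diverge, and it is not quite right as written. You are correct that the endpoint identity $d_t^\al u^h(mh)+Lu^h(mh)=f(\cdot,mh)$ is vacuous for $m=0$ (indeed the paper's proof silently restricts to $m\in\widetilde{M}\setminus\{0\}$), but your proposed repair does not close this: with $e^h$ literally given by \eqref{error_term}, and using $d_t^\al u^h(0)=0$ and $u^h(\cdot,0)=U_0^h$, the identity \eqref{APE} on $(0,h)$ is equivalent to $LU_0^h=f(\cdot,0)$, which is false in general since $U_0^h$ is merely a smooth approximation of $u_0$ and solves no equation; so ``recording the defect'' is a redefinition of $e^h$ on $(0,h)$, not a verification of \eqref{error_term} there. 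Moreover the auxiliary claim $\|U_1-U_0^h\|_{L^2}\lesssim h^\al$ is neither needed for this Proposition nor immediate: from \eqref{AppEq} with $m=1$ (where $C_{1,0}=1$) one gets $U_1-U_0^h=\Ga(2-\al)h^\al(f(\cdot,h)-LU_1)$, so the bound would require an $h$-uniform $L^2$ control of $LU_1$, which you have not established. In practice the discrepancy on $(0,h)$ is harmless for the paper because $e^h$ is only ever integrated against $\eta\in C_c^\infty(0,T)$ with $\eta\equiv 0$ on $[0,h]$ (Lemmas \ref{errorweak} and \ref{error_weakconv}), but if you want \eqref{APE} to hold verbatim on all of $(0,T+h)$ you should either state it for $t\ge h$ or adjust the definition of $e^h$ on the first interval; as it stands, your treatment of $m=0$ asserts rather than proves the identity there.
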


\begin{proof}
For $h=T/M$ and $m\in\widetilde{M}\setminus\{0\}$, we have, by \eqref{def:dCD}, 
\begin{align*}
d_t^{\al}u^h(mh) &= \dis \int_{0}^{mh} g_{1-\al}(mh-s) \frac{du^h}{ds}(s) \,ds \\
&= \dis \sum_{k=0}^{m-1} \int_{kh}^{(k+1)h} g_{1-\al}(mh-s) \frac{du^h}{ds}(s) \,ds \\
&= \dis \sum_{k=0}^{m-1} \int_{kh}^{(k+1)h} g_{1-\al}(mh-s) \frac{U_{k+1}-U_k}{h} \,ds \\
&= \dis \sum_{k=0}^{m-1} \frac{1}{h^\alpha}\psi(m-k)(U_{k+1}-U_k) \,ds \\
&= \dis \frac{1}{\Ga(2-\al)h^{\al}} \left\{ U_m- \sum_{k=0}^{m-1} C_{m,k}U_k \right\}
= -LU_m + f(mh).
\end{align*}
By Lemma \ref{RL=Cap}, we have $d_t^\al u^h = D_t^\al (u^h - U_0^h)$.
We now observe that \eqref{APE} follows by the definition of \eqref{error_term} of $e^h$.  
\end{proof}

\begin{Rem}
It is worth emphasizing that we cannot expect that viscosity solutions to \eqref{CP} is smooth in general that the Caputo derivative makes sense in the classical sense (see \cite{N} for instance).
Therefore, it is highly unlikely the case that the error term $e^h(t)$ converges to zero as $h \rightarrow 0$ in a strong sense.
In the next section, we prove that $e^h(t)$ converges to zero in a weak sense (see Theorem \ref{errorweak}).
\end{Rem}

\section{The kernel approximation}\label{sec:KA}

In this section we give a key ingredient to prove that $e^h(t)$ goes to zero as $h \rightarrow 0$ in a weak sense.
To estimate $e^h(t)$, the key term
\begin{equation} \label{ess_error}
d_t^\al u^h(t) - d_t^\al u^h(mh)
\end{equation}
needs to be carefully handled.
For this purpose we first find a primitive function for \eqref{ess_error} which will be defined by $G^h[u^h]$ such that 
\begin{equation*}
d_t^\al u^h(mh) = \frac{d}{dt}G^h[u^h](t)
\end{equation*}
for all $t \in (mh, (m+1)h)$ and $m \in \widetilde{M}$.
By Lemma  \ref{RL=Cap}, we have 
\begin{equation*}
d_t^\al u^h(t)
= 
D_t^{\alpha}(u^h-U_0^h)=\frac{d}{dt}g_{1-\alpha}\ast(u^h-U^h_0)
=\frac{d}{dt}G[u^h](t), 
\end{equation*}
where we set 
\[
G[u^h](t) \coloneqq \left(g_{2-\al} \ast \frac{du^h}{ds}\right)(t). 
\]
By definition of $u^h$, we observe that, for $t\in[mh,(m+1)h)$ and $m\in\widetilde{M}\setminus\{0\}$,  
\begin{align*}
\dis & G[u^{h}](t) = \int_0^t g_{2-\al}(t-s) \frac{du^h}{ds}(s) \,ds \\
&= \int_{mh}^{t} g_{2-\al}(t-s) \frac{U_{m+1}-U_m}{h} \,ds + \sum_{k=0}^{m-1} \int_{kh}^{(k+1)h} g_{2-\al}(t-s) \frac{U_{k+1}-U_k}{h} \,ds \\
&= g_{3-\al}(t-mh) \frac{U_{m+1}-U_m}{h} + \sum_{k=0}^{m-1} \{ (g_{3-\al}(t-kh) - g_{3-\al}(t-(k+1)h \} \frac{U_{k+1}-U_k}{h}.
\end{align*}
We approximate $G[u^h]$ by approximating $g_{3-\alpha}$. 
We set 
\begin{align}
\dis G^h[u^h](t) & \coloneqq g_{3-\al}^h(t-mh) \frac{U_{m+1}-U_m}{h} \nonumber \\
&+ \sum_{k=0}^{m-1} \{ g_{3-\al}^h(t-kh) - g_{3-\al}^h(t-(k+1)h) \} \frac{U_{k+1}-U_k}{h}
\label{app:kernel}
\end{align}
for all $t\in[mh,(m+1)h)$ and $m\in\widetilde{M}$. 
Here, we choose a family of functions $\{g_{3-\al}^h\}_{h>0}$ on $[0,T+h]$ satisfying, for all $h>0$,  
\begin{enumerate}[{(}a{)}]
\item $g_{3-\al}^h$ is continuous and linear on $(mh,(m+1)h)$ for $m\in\widetilde{M}$, 
\item $\dis \frac{d}{dt} G^h[u^h](t) = d_t^\al u(mh) $ for all $mh < t < (m+1)h$ and $m \in \widetilde{M}$, 
\item $\dis \sup_{t\in(0,T)} |(g_{3-\al} - g_{3-\al}^h)(t)| \leq C_T h$ for some $C_T \geq 0$.
\end{enumerate}

By (b), we have 
\begin{equation} \label{eq:G}
d_t^\al u^h(t) - d_t^\al u^h(mh) = \frac{d}{dt} \left( G[u^h] - G^h[u^h] \right)(t)
\end{equation}
for all $t \in (mh, (m+1)h)$ and $m \in \widetilde{M}$.
Such  $\{g_{3-\alpha}^h\}_{h>0}$ actually exists as follows. 
\begin{Lem} \label{kerap}
Set
\begin{equation*}
g_{3-\al}^h(t) \coloneqq 
\left\{
\begin{array}{ll}
0 \quad &\mbox{for} \quad 0 \leq t \leq h, \\
g_{3-\al}^{\prime}(mh)(t-mh) + \dis \sum_{k=0}^{m-1} g_{3-\al}^{\prime}(kh)h \, & \mbox{for} \quad mh \leq t \leq (m+1)h, m \in \widetilde{M}\setminus\{0\}.
\end{array}
\right. 
\end{equation*}
Then, (a)--(c) hold.
\end{Lem}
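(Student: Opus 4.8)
The plan is to verify (a)--(c) by direct computation from the explicit formula, using only the elementary identity $g_{3-\al}'(t)=g_{2-\al}(t)$ (so in particular $g_{3-\al}'(0)=g_{2-\al}(0)=0$, since $1-\al>0$) together with the fact that $g_{2-\al}$ is non-decreasing on $[0,\infty)$. Property (a) is immediate: on $(0,h)$ the function $g_{3-\al}^h$ is the constant $0$, and on each $(mh,(m+1)h)$ with $m\ge1$ it is affine in $t$ with slope $g_{3-\al}'(mh)$, so it is linear on every subinterval. For global continuity I would match one-sided limits at each node $t=mh$: the $m$-th branch gives $g_{3-\al}^h(mh)=\sum_{k=0}^{m-1}g_{3-\al}'(kh)h$, while the $(m-1)$-th branch gives $g_{3-\al}'((m-1)h)h+\sum_{k=0}^{m-2}g_{3-\al}'(kh)h$, which is the same value; at $t=h$ both branches give $0$ because $g_{3-\al}'(0)=0$.

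For (b), fix $m\in\widetilde M$ and $t\in(mh,(m+1)h)$ and differentiate \eqref{app:kernel} term by term. Since $t-mh\in(0,h)$, the leading term $g_{3-\al}^h(t-mh)(U_{m+1}-U_m)/h$ is identically zero and contributes nothing. For $0\le k\le m-1$ one has $t-kh\in((m-k)h,(m-k+1)h)$ and $t-(k+1)h\in((m-k-1)h,(m-k)h)$, so the explicit formula yields $\frac{d}{dt}g_{3-\al}^h(t-kh)=g_{3-\al}'((m-k)h)=g_{2-\al}((m-k)h)$ and $\frac{d}{dt}g_{3-\al}^h(t-(k+1)h)=g_{2-\al}((m-k-1)h)$; the borderline case $k=m-1$, where $t-(k+1)h\in(0,h)$ sits in the trivial branch, is consistent precisely because $g_{2-\al}(0)=0$. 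Using $g_{2-\al}((m-k)h)-g_{2-\al}((m-k-1)h)=\psi(m-k)h^{1-\al}$, the identity recorded in Section \ref{sec:DS}, I obtain
\[
\frac{d}{dt}G^h[u^h](t)=\frac{1}{h^\al}\sum_{k=0}^{m-1}\psi(m-k)(U_{k+1}-U_k),
\]
which equals $d_t^\al u^h(mh)$ by the chain of equalities in the proof of Proposition \ref{u^h_eq}; the case $m=0$ is trivial since both sides vanish.

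For (c), I would first observe that the definition of $g_{3-\al}^h$ rewrites as $g_{3-\al}^h(t)=\int_0^t g_{2-\al}(\lfloor s/h\rfloor h)\,ds$, whereas $g_{3-\al}(t)=\int_0^t g_{2-\al}(s)\,ds$. Hence $0\le g_{3-\al}(t)-g_{3-\al}^h(t)=\int_0^t\big(g_{2-\al}(s)-g_{2-\al}(\lfloor s/h\rfloor h)\big)\,ds$, and bounding the integrand on $[kh,(k+1)h)$ by $g_{2-\al}((k+1)h)-g_{2-\al}(kh)$ (monotonicity of $g_{2-\al}$) and telescoping gives, for $t\in[Nh,(N+1)h]$, the estimate $g_{3-\al}(t)-g_{3-\al}^h(t)\le h\,g_{2-\al}((N+1)h)$. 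For $t\in(0,T)$ we have $Nh<T$, hence $(N+1)h<T+h\le 2T$ (as $h=T/M\le T$), so $\sup_{(0,T)}|g_{3-\al}-g_{3-\al}^h|\le g_{2-\al}(2T)\,h$, i.e.\ (c) holds with $C_T=(2T)^{1-\al}/\Gamma(2-\al)$.

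All of this is essentially bookkeeping; the only step I expect to require genuine care is the term-by-term differentiation in (b), namely correctly identifying which piece of the piecewise definition of $g_{3-\al}^h$ each shifted argument $t-kh$ and $t-(k+1)h$ falls into, and checking that the resulting formula survives the borderline index $k=m-1$ (where $t-(k+1)h$ enters the trivial branch $[0,h]$) — which it does precisely because $g_{2-\al}$ vanishes at the origin.
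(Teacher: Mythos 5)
Your proof is correct and follows essentially the same route as the paper: (a) and (b) by the same piecewise differentiation using $(g_{3-\al}^h)'=g_{2-\al}(mh)$ on each subinterval (with the $k=m-1$ boundary term handled by $g_{2-\al}(0)=0$, exactly as in the paper), and (c) by an elementary monotonicity/Riemann-sum estimate giving an $O(h)$ bound. The only cosmetic difference is in (c), where you telescope the error of the left-endpoint approximation $\int_0^t g_{2-\al}(\lfloor s/h\rfloor h)\,ds$ directly, while the paper first notes $g_{3-\al}-g_{3-\al}^h$ is nondecreasing, reduces the supremum to $t=T$, and compares $\sum_k k^{1-\al}$ with an integral; both yield the same conclusion.
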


\begin{proof}
We can easily check (a) since
\begin{equation*} 
(g_{3-\al}^h)^{\prime}(t) = g_{3-\al}^{\prime}(mh) = g_{2-\al}(mh) \quad \mbox{for} \quad mh < t < (m+1)h 
\ \text{and} \ m \in \widetilde{M}.
\end{equation*}
By a direct computation,
\begin{align*}
\dis \frac{d}{dt}G^h[u^h](t) &= (g_{3-\al}^h)^{\prime}(t-mh) \frac{U_{m+1}-U_m}{h} \\
&+ \sum_{k=0}^{m-1} \{ (g_{3-\al}^h)^{\prime}(t-kh) - (g_{3-\al}^h)^{\prime}(t-(k+1)h) \} \frac{U_{k+1}-U_k}{h} \\
&= \sum_{k=0}^{m-1} \{ g_{2-\al}((m-k)h) - g_{2-\al}((m-k-1)h) \} \frac{U_{k+1}-U_k}{h} 
= d_t^{\al}u^h(mh)
\end{align*}
for all $t \in (mh, (m+1)h)$ and $m \in \widetilde{M}$. 
This proves (b).

Noting that
\begin{align*}
\dis \frac{d}{dt}(g_{3-\al} - g_{3-\al}^h) &= g_{3-\al}^{\prime}(t) - g_{3-\al}^{\prime}(mh) = g_{2-\al}(t) - g_{2-\al}(mh) \geq 0
\end{align*}
for all $t \in [mh, (m+1)h)$ and $m \in \widetilde{M}$, $g_{3-\al} - g_{3-\al}^h$ is a nondecreasing function.
Thus, we can easily check that
\begin{equation*}
\dis \sup_{t \in (0,T)} |(g_{3-\al} - g_{3-\al}^h)(t)| = |g_{3-\al}(T) - g_{3-\al}^h(T)|.
\end{equation*}
Noting that 
\[
g_{3-\alpha}^h(T)=g_{3-\alpha}^h(Mh)
=\sum_{k=0}^{M-1}g_{3-\alpha}'(kh)h
=\sum_{k=0}^{M-1}g_{2-\alpha}(kh)h, 
\]
we have
\begin{align*}
|g_{3-\al}(T) - g_{3-\al}^h(T)|
&= \dis \frac{1}{\Ga (3-\al)} \left( (Mh)^{2-\al} - (2-\al)\sum_{k=0}^{M-1}(kh)^{1-\al}h  \right) \\
&= \dis \frac{h^{2-\al}}{\Ga (3-\al)} \left( M^{2-\al} - (2-\al)\sum_{k=0}^{M-1}k^{1-\al} \right).
\end{align*}
Note
\begin{equation*}
\dis (2-\al) \sum_{k=0}^{M-1} k^{1-\al} \geq (2-\al) \int_{0}^{M-1} t^{1-\al} \,dt = (M-1)^{2-\al}.
\end{equation*}
We have
\begin{equation*}
\dis |g_{3-\al}(T) - g_{3-\al}^h(T)|
\leq \frac{T^{2-\al}}{\Ga (3-\al)} \left( 1- (1-\frac{1}{M})^{2-\al} \right)
\leq C T^{2-\al} \frac{1}{M} = C T^{1-\al}h.
\end{equation*}
\end{proof}

Finally we give an important estimate of $G[u^h] - G^h[u^h]$.
We recall that we write $U_k$ for $U_k^h$ for simplicity.
\begin{Lem} \label{Tonari}
We fix $T=Mh$. Let $U_k$ be the functions given by \eqref{ini_app}, \eqref{AppEq} for $k\in\widetilde{M}$.
For all $\ep > 0$, there exists $h_0 > 0 $ such that, for all $0 < h < h_0$,
\begin{equation*}
\|U_{k+1} - U_k\|_{\infty} < \ep \quad \mbox{for all} \quad k \in \widetilde{M}. 
\end{equation*}
\end{Lem}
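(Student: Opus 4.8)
The plan is to prove the statement by establishing uniform equicontinuity of the family $\{u_c^h\}_{h>0}$ in the time variable near each point, and then reading off the conclusion from the fact that $U_{k+1}-U_k = u_c^h((k+1)h,\cdot)-u_c^h(kh,\cdot)$ as functions on $\overline\Omega$. Actually, a cleaner route exploits the uniform convergence $u_c^h\to u$ from Theorem \ref{convisPC} together with the uniform continuity of the limit $u$ on the compact set $\overline\Omega\times[0,T]$. First I would fix $\ep>0$. Since $u\in C(\overline\Omega\times[0,T))$ and (after a standard argument) $u$ extends continuously up to $t=T$, $u$ is uniformly continuous on $\overline\Omega\times[0,T]$; choose $\eta>0$ so that $|u(x,t)-u(x,s)|<\ep/3$ whenever $|t-s|\le\eta$. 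Next, by \eqref{conv:uh} (equivalently Theorem \ref{convisPC}) pick $h_1>0$ so that $\sup_{\overline\Omega\times[0,T]}|u_c^h-u|<\ep/3$ for $0<h<h_1$. Then for $0<h<h_0:=\min\{h_1,\eta\}$ and any $k\in\widetilde M$ with $(k+1)h\le T$, at any $x\in\overline\Omega$ we have
\[
|U_{k+1}(x)-U_k(x)| \le |u_c^h(x,(k+1)h)-u(x,(k+1)h)| + |u(x,(k+1)h)-u(x,kh)| + |u(x,kh)-u_c^h(x,kh)| < \ep,
\]
using $u_c^h(x,mh)=U_m(x)$ and $|(k+1)h-kh|=h\le\eta$. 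Taking the supremum over $x$ gives $\|U_{k+1}-U_k\|_\infty<\ep$, which is exactly the claim; note the remaining boundary index $k=M-1$ (with $(M-1)h<T$) is covered, and if one wants $k=M$ one uses the buffer up to $T+h$ where $u_c^h$ is constant equal to $U_M$ beyond $T$, or simply restricts attention to $k\le M-1$ as needed in the sequel.

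The only genuine subtlety, and the step I expect to need the most care, is the passage from $u\in C(\overline\Omega\times[0,T))$ (open at $t=T$) to uniform continuity on the closed slab $\overline\Omega\times[0,T]$. This is where I would invoke the a priori bound already used in the proof of Theorem \ref{convisPC}: the barrier $W(x,t)=Kw(x)+\ep$ together with a corresponding lower barrier shows $\{u_c^h\}$ is uniformly bounded on $\overline\Omega\times[0,T+h]$, and the half-relaxed limit construction there in fact produces a viscosity solution on the slightly larger interval, so $u$ is (uniformly) continuous up to and including $t=T$. Alternatively one can work purely with the discrete functions: since \eqref{conv:uh} gives uniform convergence of $u_c^h$ on $\overline\Omega\times[0,T]$, the family is uniformly Cauchy, hence uniformly equicontinuous, and one extracts the modulus of continuity in $t$ directly without ever mentioning the limit's behaviour at $T$. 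I would present the argument in the second form if the reader has not yet been told that $u$ extends past $T$, to keep the lemma self-contained.

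A final remark on bookkeeping: the statement fixes $T=Mh$, so $M$ depends on $h$; the estimate must hold uniformly in this $M$, which it does, since $h_0$ above was chosen independently of $M$ (it depends only on $\ep$, the modulus of continuity of $u$, and the rate in \eqref{conv:uh}). No quantitative rate in $\ep$ is needed here — mere $o(1)$ as $h\to0$, uniformly in $k$, suffices for the applications in Section \ref{sec:EE} — so I would not attempt to extract an explicit modulus unless a later estimate demands it.
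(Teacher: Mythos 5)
Your main argument coincides with the paper's: the paper's entire proof is the remark that the lemma is a straightforward consequence of the uniform convergence $u_c^h \to u$ from Theorem \ref{convisPC}, and your triangle inequality
$|U_{k+1}(x)-U_k(x)| \le 2\sup_{\overline{\Omega}\times[0,T]}|u_c^h-u| + |u(x,(k+1)h)-u(x,kh)|$,
combined with uniform continuity of the limit $u$ on the compact set $\overline{\Omega}\times[0,T]$, is exactly the intended elaboration, with the correct observation that $h_0$ depends only on $\ep$, the modulus of continuity of $u$ and the convergence rate, not on $M$. One caveat: your ``self-contained'' alternative --- that uniform convergence makes $\{u_c^h\}$ uniformly Cauchy and hence uniformly equicontinuous --- is false, since each $u_c^h$ is piecewise constant in $t$ and therefore not even continuous in time; the temporal modulus of continuity must come from the continuous limit $u$, so keep the first version of the argument (the continuity of $u$ up to $t=T$, which Theorem \ref{convisPC} implicitly assumes, is a gloss in the paper itself, not a defect of your proof). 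Likewise, the index $k=M$ would involve $U_{M+1}$, which the paper never defines; restricting to $k\le M-1$, as you note, is what is actually needed in the sequel.
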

This is a straightforward result of the uniform convergence of $u_c^h$ by Theorem \ref{convisPC}.
Next, we prove a key ingredient of the paper to prove that the error term $e^h$ goes to zero in a weak sense as $h \rightarrow 0$.
\begin{Lem} \label{errorweak}
Let $G^h[u^h]$ be the function  given by \eqref{app:kernel}. 
For all $\ep > 0$, there exists $h_0 > 0$ such that, for all $0 < h < h_0$, 
\begin{equation}\label{ineq:G-Gh}
\left| \int_0^T \int_{\Omega} (G[u^h] - G^h[u^h]) \eta \ph\, dxdt \right| \leq C_T \|\ph\|_{L^1} (\|\eta\|_{\infty} + \|\eta^{\prime}\|_{\infty}) \ep,
\end{equation}
for all $\ph \in C_c^{\infty}(\Omega)$ , $\eta \in C_c^{\infty}(0,T)$, 
where $C_T$ is a positive constant which is independent of $\ep$.
\end{Lem}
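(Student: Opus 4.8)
The plan is to exploit the explicit piecewise-linear structure of $g_{3-\al}^h$ against $g_{3-\al}$ together with Lemma \ref{Tonari}, which controls the increments $\|U_{k+1}-U_k\|_\infty$ uniformly in $k$. First I would rewrite the difference $G[u^h]-G^h[u^h]$ on each interval $[mh,(m+1)h)$ using the Abel-summation-type formulas \eqref{app:kernel} and its analogue for $G[u^h]$: setting $\Delta_k := (U_{k+1}-U_k)/h$ and $\delta_k(t) := (g_{3-\al}-g_{3-\al}^h)(t-kh)$, one gets
\begin{equation*}
(G[u^h]-G^h[u^h])(t) = \delta_m(t)\Delta_m + \sum_{k=0}^{m-1}\big(\delta_k(t)-\delta_{k+1}(t)\big)\Delta_k .
\end{equation*}
Since by Lemma \ref{kerap}(c) we have $\sup_{(0,T)}|\delta_k| \le C_T h$, and since $\|\Delta_k\|_\infty = \|U_{k+1}-U_k\|_\infty/h < \ep/h$ for $h<h_0$ by Lemma \ref{Tonari}, a naive bound loses a factor $m \le M = T/h$ from the sum; so the crude estimate only gives $C_T \ep$ without the extra decay, which is in fact exactly what we want — but I must be careful that the number of nonzero terms $\delta_k-\delta_{k+1}$ telescopes rather than accumulates.

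The cleaner route is to integrate against $\eta$ first and move a derivative. By \eqref{eq:G} we have $\frac{d}{dt}(G[u^h]-G^h[u^h])(t) = d_t^\al u^h(t)-d_t^\al u^h(mh)$, which is bounded but not small; instead I would integrate by parts in $t$ directly on $G[u^h]-G^h[u^h]$, using that $\eta\in C_c^\infty(0,T)$ vanishes at the endpoints:
\begin{equation*}
\int_0^T (G[u^h]-G^h[u^h])(t)\,\eta(t)\,dt = -\int_0^T \Big(\int_0^t (G[u^h]-G^h[u^h])(s)\,ds\Big)\eta'(t)\,dt,
\end{equation*}
and then estimate the running integral. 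Either way, the core pointwise bound I need is
\begin{equation*}
\|(G[u^h]-G^h[u^h])(t)\|_\infty \le C_T\,\ep \qquad\text{for all } t\in(0,T),\ 0<h<h_0,
\end{equation*}
after which \eqref{ineq:G-Gh} follows by Fubini, pulling out $\|\ph\|_{L^1}$, and bounding $\int_0^T|\eta|\,dt \le T\|\eta\|_\infty$ (the $\|\eta'\|_\infty$ term appears if the integration-by-parts form is used; keeping both makes the statement robust to whichever form is chosen). The factor $C_T$ absorbs $T$, $\Ga(3-\al)$, and the constant from Lemma \ref{kerap}(c).

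To prove that pointwise bound I would split: the leading term is $|\delta_m(t)\Delta_m| \le C_T h \cdot \ep/h = C_T\ep$ directly. For the sum, the point is that $\delta_k - \delta_{k+1}$ has a definite sign (since $g_{3-\al}-g_{3-\al}^h$ is nondecreasing, as shown in the proof of Lemma \ref{kerap}, the map $k\mapsto \delta_k(t) = (g_{3-\al}-g_{3-\al}^h)(t-kh)$ is \emph{nonincreasing} in $k$ for fixed $t$, so each $\delta_k-\delta_{k+1}\ge 0$), hence
\begin{equation*}
\Big|\sum_{k=0}^{m-1}(\delta_k(t)-\delta_{k+1}(t))\Delta_k\Big| \le \max_k\|\Delta_k\|_\infty \sum_{k=0}^{m-1}(\delta_k(t)-\delta_{k+1}(t)) = \max_k\|\Delta_k\|_\infty\,(\delta_0(t)-\delta_m(t)),
\end{equation*}
and $\delta_0(t)-\delta_m(t) \le |\delta_0(t)| + |\delta_m(t)| \le 2C_T h$, so this term is $\le (\ep/h)\cdot 2C_T h = 2C_T\ep$. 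Combining gives the claim. \textbf{The main obstacle} is precisely this telescoping observation: without the sign/monotonicity of $g_{3-\al}-g_{3-\al}^h$ one would pick up an uncontrolled factor $M\sim 1/h$ from summing over $k$, which would destroy the estimate; recognizing that the kernel approximation was designed so that $g_{3-\al}-g_{3-\al}^h$ is monotone (and uniformly $O(h)$) is what makes the sum collapse. A secondary technical point to handle carefully is the dependence on $m$ near $t=T$, i.e. checking the bounds are uniform over all $m\in\widetilde{M}$ and that the choice of $h_0$ from Lemma \ref{Tonari} is the only $\ep$-dependence, with $C_T$ genuinely independent of $\ep$ and $h$.
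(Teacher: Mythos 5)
Your argument is correct, and it takes a genuinely different (and in fact sharper) route than the paper. The paper first multiplies by $\eta\ph$, splits the integral into the contributions $I_1$ (the term with $l^h(t-mh)$) and $I_2$ (the sum over $k$), and handles $I_2$ by swapping the order of summation and performing a discrete integration by parts in time that moves a difference quotient onto $\eta$ --- this is precisely where the $\|\eta'\|_{\infty}$ term in \eqref{ineq:G-Gh} comes from --- and then concludes with $\int_0^h l^h\,dt\le C_Th^2$, $\sup_{(0,T)}l^h\le C_Th$ and Lemma \ref{Tonari}. You instead prove the pointwise bound $\sup_{\Omega\times(0,T)}|G[u^h]-G^h[u^h]|\le C_T\ep$: the bracketed differences $l^h(t-kh)-l^h(t-(k+1)h)$ are nonnegative because $l^h=g_{3-\al}-g^h_{3-\al}$ is nondecreasing (shown in the proof of Lemma \ref{kerap}), so the sum telescopes to $l^h(t)-l^h(t-mh)\le l^h(t)\le C_Th$ (note $l^h\ge0$ since $l^h(0)=0$), which against $\|\Delta_k\|_{\infty}\le\ep/h$ from Lemma \ref{Tonari} gives $C_T\ep$, and the leading term is handled the same way; the weak estimate \eqref{ineq:G-Gh} then follows at once with only the $\|\eta\|_{\infty}$ term, so your conclusion is actually stronger than the stated lemma and avoids the paper's ``tedious computations.'' Both proofs consume the same two ingredients (Lemma \ref{kerap} and Lemma \ref{Tonari}) through the cancellation $({\ep}/{h})\cdot C_Th$, and your $L^\infty$ bound would serve equally well in the later application via \eqref{eq:G} in Lemma \ref{error_weakconv} (yielding there a bound with $\|\eta'\|_\infty$ only). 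Two small remarks: the alternative integration-by-parts detour in your second paragraph is superfluous once the pointwise bound is available, and when you bound $\delta_0(t)-\delta_m(t)$ it is cleaner to use $0\le\delta_m(t)$ and $\delta_0(t)=l^h(t)\le C_Th$ rather than the triangle inequality, but neither point affects correctness.
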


\begin{proof}
Fix $\ep > 0$. Set $l^h(t) \coloneqq (g_{3-\al} - g_{3-\al}^h)(t)$.
Then we have
\begin{align*}
\dis (G[u^h]-G^h[u^h])(t)
&= l^h(t-mh) \frac{U_{m+1}-U_m}{h} \\
&+ \sum_{k=0}^{m-1} \{ l^h(t-kh) - l^h(t-(k+1)h) \} \frac{U_{k+1}-U_k}{h}.
\end{align*}
We multiply the above equation by $\ph \in C_c^{\infty}(\Omega)$ and $\eta \in C_c^{\infty}(0,T)$.
Take a small $h > 0$ satisfying $\eta(t) = 0$ for all $t \in [0, h] \cup [T-h, T].$
Hence,
\begin{align*}
\dis \int_0^T\int_{\Omega} (G[u^h] - &G^h[u^h]) \eta(t) \ph(x) \,dx dt
=  \int_h^T\int_{\Omega} l^h(t-mh) \frac{U_{m+1}-U_m}{h} \eta(t) \ph(x) \,dx dt \\
&+ \int_h^T\int_{\Omega}  \sum_{k=0}^{m-1} \{ l^h(t-kh) - l^h(t-(k+1)h) \} \frac{U_{k+1}-U_k}{h} \eta(t) \ph(x) \,dx dt \\
&\revcoloneqq I_1 + I_2.
\end{align*}

We have
\begin{align*}
I_1
&=  \frac{1}{h} \sum_{m=1}^{M-1} \int_{mh}^{(m+1)h} l^h(t-mh) \eta(t) \,dt \int_{\Omega} (U_{m+1}(x)-U_m(x)) \ph(x) \,dx \\
&= \frac{1}{h} \sum_{m=1}^{M-1} \int_{0}^{h} l^h(t) \eta(t+mh) \,dt \int_{\Omega} (U_{m+1}(x)-U_m(x)) \ph(x) \,dx \\
&\leq \frac{1}{h} \sum_{m=1}^{M-1} \int_{0}^{h} l^h(t) \|\eta\|_{L^{\infty}} \|U_{m+1}-U_m\|_{L^{\infty}} \|\ph\|_{L^1}\,dt.
\end{align*}
Note that by property (c) of $g_{3-\alpha}^h$ we have 
\[
\int_0^hl^h(t)\,dt=\int_0^h(g_{3-\alpha}-g_{3-\alpha}^h)(t)\,dt\le C_Th^2. 
\]
By Lemma \ref{Tonari}, we have 
\begin{align*}
&|I_1| \leq \frac{1}{h} \sum_{m=1}^{M-1} \int_{0}^{h} l^h(t) \|\eta\|_{L^{\infty}} \|U_{m+1}-U_m\|_{L^{\infty}} \|\ph\|_{L^1}\,dt\\
\leq &
C_T\|\eta\|_{L^\infty}\|\varphi\|_{L^1}Mh\ep=C_T T\|\eta\|_{L^\infty}\|\varphi\|_{L^1}\ep.
\end{align*}

Next, we have
\begin{equation*}
I_2 = \int_{\Omega} \sum_{m=1}^{M-1} \int_{mh}^{(m+1)h} \sum_{k=0}^{m-1} \{ l^h(t-kh) - l^h(t-(k+1)h) \} \frac{U_{k+1}(x)-U_{k}(x)}{h} \eta(t) \ph(x) \,dtdx.
\end{equation*}
Note that by some tedious computations we have
\begin{align*}
I_2
&= \sum_{m=1}^{M-1} \sum_{k=0}^{m-1} \int_{mh}^{(m+1)h} (l^h(t-kh) - l^h(t-(k+1)h)) \eta(t)\,dt  \int_{\Omega} \frac{U_{k+1}(x)-U_{k}(x)}{h} \ph(x) \,dx \\
&= \sum_{m=1}^{M-1} \int_{mh}^{T} (l^h(t-(m-1)h) - l^h(t-mh)) \eta(t)\,dt  \int_{\Omega} \frac{U_{m}(x)-U_{m-1}(x)}{h} \ph(x) \,dx \\
&= \frac{1}{h} \sum_{m=1}^{M-1} \int_{mh}^{T} l^h(t-(m-1)h) \eta(t)\,dt  \int_{\Omega} (U_{m}(x)-U_{m-1}(x)) \ph(x) \,dx\\
& \hspace*{2cm} - \frac{1}{h} \sum_{m=1}^{M-1} \int_{mh}^{T} l^h(t-mh) \eta(t) \,dt \int_{\Omega} (U_{m}(x)-U_{m-1}(x)) \ph(x) \,dx.
\end{align*}
We extend $\eta(t) = 0$ for $t < 0$ and $t > T$ and keep to use the same notation by abuse of notation.
Moreover, we have
\begin{align*}
I_2
&= \frac{1}{h} \sum_{m=1}^{M-1} \left \{ \int_{h}^{T-(m-1)h} l^h(t) \eta(t+(m-1)h) \,dt \right. \\
& \hspace*{3cm} - \left. \int_{0}^{T-mh} l^h(t) \eta(t+mh) \,dt \right \} \int_{\Omega} (U_{m}(x)-U_{m-1}(x)) \ph(x) \,dx \\
&\leq \frac{1}{h} \sum_{m=1}^{M-1} \int_{0}^{h} l^h(t) \eta(t+(m-1)h) \,dt \int_{\Omega} (U_{m}(x)-U_{m-1}(x)) \ph(x) \,dx \\
& \hspace*{3cm} + \sum_{m=1}^{M-1} \int_{0}^{T} l^h(t) \frac{\eta(t+(m-1)h) - \eta(t+mh)}{h} \,dt \int_{\Omega} (U_{m}(x)-U_{m-1}(x)) \ph(x) \,dx\\
&\leq \frac{1}{h} \sum_{m=1}^{M-1} \int_{0}^{h} l^h(t) \|\eta\|_{L^{\infty}} \|U_{m}-U_{m-1}\|_{L^{\infty}} \|\ph\|_{L^1} \,dt  + \sum_{m=1}^{M-1} \int_{0}^{T} l^h(t) \|\eta^{\prime}\|_{L^{\infty}} \|U_{m}-U_{m-1}\|_{L^{\infty}} \|\ph\|_{L^1}\,dt.
\end{align*}
By a similar argument to the above, we get 
\begin{align*}
|I_2|
&\leq C_T(\|\eta\|_{L^\infty}+\|\eta'\|_{L^\infty})\|\varphi\|_{L^1}h \sum_{m=1}^{M-1} \|U_{m}-U_{m-1}\|_{L^\infty} \\
&\leq C_T(\|\eta\|_{L^\infty}+\|\eta'\|_{L^\infty})\|\varphi\|_{L^1}Mh\ep = C_T(\|\eta\|_{L^\infty}+\|\eta'\|_{L^\infty})\|\varphi\|_{L^1}T\ep.
\end{align*}
Consequently, we obtain \eqref{ineq:G-Gh}. 
\end{proof}

\section{Energy estimate}\label{sec:EE}

\begin{Lem} \label{dtal_coer}
Let $\{ U_k \}$ be a family of functions in $L^2(\Omega)$. 
Then, we have
\begin{equation*}
\dis \left( \frac{1}{\Ga(2-\al)h^{\al}} \left\{ U_m - \sum_{k=0}^{m-1} C_{m,k}U_k \right\}, U_m \right)_{L^2}
\geq
\frac{1}{2\Ga(2-\al)h^{\al}} \left\{ \|U_m\|_{L^2}^2 - \sum_{k=0}^{m-1} C_{m,k}\|U_k\|_{L^2}^2 \right\}, 
\end{equation*}
where we denote $L^2$-norm by $(\cdot,\cdot)_{L^2}$. 
\end{Lem}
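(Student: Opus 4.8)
\medskip
The plan is to reduce the inequality to the elementary fact that, for a convex combination of vectors in a Hilbert space, a naturally arising quadratic form is a nonnegative sum of squares. The one structural input needed is the normalization
\[
\sum_{k=0}^{m-1} C_{m,k} = 1 \qquad \text{for every } m \in \widetilde{M}\setminus\{0\}.
\]

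I would first prove this normalization. By the definition of $C_{m,k}$ the sum telescopes,
\[
\sum_{k=0}^{m-1} C_{m,k} = \Ga(2-\al)\Big( \psi(m) + \sum_{k=1}^{m-1}\big(\psi(m-k) - \psi(m-k+1)\big) \Big) = \Ga(2-\al)\,\psi(1),
\]
and since $1-\al>0$ gives $g_{2-\al}(0)=0$, we have $\psi(1) = g_{2-\al}(1) - g_{2-\al}(0) = 1/\Ga(2-\al)$, so the sum equals $1$. Together with the inequality $C_{m,k}\ge0$ already recorded in Section \ref{sec:DS}, this exhibits $\{C_{m,k}\}_{k=0}^{m-1}$ as the weights of a convex combination.

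Next I would expand the inner product and complete the square. Since $1/(\Ga(2-\al)h^\al)>0$, it suffices to prove
\[
\Big( U_m - \sum_{k=0}^{m-1} C_{m,k}U_k,\ U_m \Big)_{L^2} \ \ge\ \tfrac12\Big( \|U_m\|_{L^2}^2 - \sum_{k=0}^{m-1} C_{m,k}\|U_k\|_{L^2}^2 \Big).
\]
By bilinearity the left-hand side equals $\|U_m\|_{L^2}^2 - \sum_{k=0}^{m-1} C_{m,k}(U_k, U_m)_{L^2}$; using $\sum_{k=0}^{m-1}C_{m,k}=1$ to write $\|U_m\|_{L^2}^2 = \sum_{k=0}^{m-1}C_{m,k}\|U_m\|_{L^2}^2$, the difference of the two sides becomes
\[
\tfrac12 \sum_{k=0}^{m-1} C_{m,k}\big( \|U_m\|_{L^2}^2 - 2(U_k,U_m)_{L^2} + \|U_k\|_{L^2}^2 \big) = \tfrac12 \sum_{k=0}^{m-1} C_{m,k}\,\|U_m - U_k\|_{L^2}^2 \ \ge\ 0 ,
\]
using $C_{m,k}\ge0$. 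The degenerate case $m=0$ is immediate since both sides are then nonnegative multiples of $\|U_0\|_{L^2}^2$.

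The argument presents no genuine difficulty; the only place where the specific discrete-Caputo structure enters is the telescoping computation of $\sum_k C_{m,k}$ and the boundary value $\psi(1)=1/\Ga(2-\al)$, the remainder being the standard convexity/sum-of-squares device underlying discrete energy estimates for Caputo-type schemes.
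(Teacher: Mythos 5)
Your proof is correct and follows essentially the same route as the paper: expand the inner product, use $\sum_{k=0}^{m-1}C_{m,k}=1$ together with $C_{m,k}\ge 0$, and bound the cross terms via $2(U_k,U_m)_{L^2}\le \|U_k\|_{L^2}^2+\|U_m\|_{L^2}^2$ (the paper phrases this as the Schwarz inequality, you as a sum of squares $\tfrac12\sum_k C_{m,k}\|U_m-U_k\|_{L^2}^2\ge 0$, which is the same estimate). Your explicit telescoping verification of the normalization, which the paper only asserts, is a welcome but minor addition.
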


\begin{proof}
We have
\begin{align*}
\dis & \left( \frac{1}{\Ga(2-\al)h^{\al}} \left\{ U_m - \sum_{k=0}^{m-1} C_{m,k}U_k \right\}, U_m \right)_{L^2} \\
& = 
\dis \frac{1}{\Ga(2-\al)h^{\al}} \left\{ \|U_m\|_{L^2}^2 - \sum_{k=0}^{m-1} C_{m,k}(U_k, U_m)_{L^2} \right\}.
\end{align*}
Noting that
\begin{equation*}
\sum_{k=0}^{m-1} C_{m,k} = 1 \quad \mbox{for all} \quad m \in \widetilde{M}\setminus\{0\},
\end{equation*}
by the Schwarz inequality, we obtain
\begin{equation*}
\sum_{k=0}^{m-1} C_{m,k} (U_k, U_m)_{L^2} \leq \frac{1}{2} \left( \|U_m\|_{L^2}^2 + \sum_{k=0}^{m-1} C_{m,k} \|U_k\|_{L^2}^2 \right),
\end{equation*}
which finishes the proof.
\end{proof}

\begin{Rem}
We notice that the energy estimate for the Caputo fractional derivative 
\begin{equation*}
(d_t^\al u(t), u(t))_{L^2} \geq \dis \frac{1}{2} d_t^\al(\|u\|_{L^2}^2)(t) \quad \mbox{for} ~~ u \in W^{1,1}(0, T; L^2(\Omega)).  
\end{equation*}
is known. 
We refer to \cite[Theorem 3.3]{KRY} for the proof. 
We can regard Lemma \ref{dtal_coer} as a discrete analog of this estimate.  
\end{Rem}

\begin{Lem} \label{int_dtal}
Let $\{ U_k \}$ be a family of functions in $L^2(\Omega)$. 
Then, we have
\begin{align*}
& \sum_{n=1}^m \frac{1}{\Ga(2-\al)h^{\al}} \left\{ \|U_n\|_{L^2}^2 - \sum_{k=0}^{n-1} C_{n,k}\|U_k\|_{L^2}^2 \right\} \\
& \quad \quad \quad \geq \frac{(mh)^{-\al}}{\Ga(1-\al)} \sum_{k=1}^m \|U_k\|_{L^2}^2 - \frac{(mh)^{1-\al}}{\Ga(2-\al)h} \|U_0\|_{L^2}^2 \quad \mbox{for all} \quad m \in \widetilde{M}\setminus\{0\}.
\end{align*}
\end{Lem}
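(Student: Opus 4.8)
The plan is to sum the pointwise estimate from Lemma~\ref{dtal_coer} over $n = 1, \dots, m$ and then reorganize the double sum on the left-hand side by collecting, for each fixed index $k$, all the coefficients $C_{n,k}$ that multiply $\|U_k\|_{L^2}^2$. Writing $a_k \coloneqq \|U_k\|_{L^2}^2$ and $c \coloneqq 1/(\Ga(2-\al)h^\al)$, the left-hand side is
\begin{equation*}
c \sum_{n=1}^m \Big( a_n - \sum_{k=0}^{n-1} C_{n,k} a_k \Big)
= c \sum_{n=1}^m a_n - c \sum_{k=0}^{m-1} a_k \sum_{n=k+1}^{m} C_{n,k}.
\end{equation*}
So the whole estimate reduces to a statement about the weights: I need a good lower bound for the quantity $a_n$'s coefficient, i.e. I must understand $\sum_{n=k+1}^m C_{n,k}$ for $k \geq 1$ and separately the $k=0$ term $\sum_{n=1}^m C_{n,0}$.

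The key computation is to evaluate these weight sums using the definition $C_{n,k} = \Ga(2-\al)(\psi(n-k) - \psi(n-k+1))$ for $k \geq 1$ (a telescoping structure in $n$) and $C_{n,0} = \Ga(2-\al)\psi(n)$. For $1 \le k \le m-1$, telescoping in $n$ from $k+1$ to $m$ gives $\sum_{n=k+1}^m C_{n,k} = \Ga(2-\al)(\psi(1) - \psi(m-k+1))$; since $\psi(1) = g_{2-\al}(1) = 1/\Ga(2-\al)$, the coefficient of $-c\, a_k$ becomes $1 - \Ga(2-\al)\psi(m-k+1)$, so the contribution of $a_k$ on the left is $c\,\Ga(2-\al)\psi(m-k+1)\,a_k \ge 0$ for $1 \le k \le m-1$, and $a_m$ keeps its full coefficient $c$. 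For $k=0$, $\sum_{n=1}^m C_{n,0} = \Ga(2-\al)\sum_{n=1}^m \psi(n)$, which telescopes (since $\psi(n) = g_{2-\al}(n) - g_{2-\al}(n-1)$) to $\Ga(2-\al)\,g_{2-\al}(m) = \Ga(2-\al) (mh)^{1-\al}/(\Ga(2-\al)h^{1-\al})$, giving exactly the negative term $-\frac{(mh)^{1-\al}}{\Ga(2-\al)h}\|U_0\|_{L^2}^2$ claimed in the statement. Thus the left-hand side equals
\begin{equation*}
c\, a_m + c \sum_{k=1}^{m-1} \Ga(2-\al)\,\psi(m-k+1)\, a_k - \frac{(mh)^{1-\al}}{\Ga(2-\al)h}\, a_0,
\end{equation*}
and it remains to show $c\, a_m + c \sum_{k=1}^{m-1}\Ga(2-\al)\psi(m-k+1) a_k \ge \frac{(mh)^{-\al}}{\Ga(1-\al)}\sum_{k=1}^m a_k$, i.e. that each coefficient of $a_k$ ($1 \le k \le m$) on the left dominates $\frac{(mh)^{-\al}}{\Ga(1-\al)}$.

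The remaining inequality is a coefficient-by-coefficient check. For $k=m$ the left coefficient is $c = 1/(\Ga(2-\al)h^\al)$; for $1 \le k \le m-1$ it is $\psi(m-k+1)/h^\al = (g_{2-\al}(m-k+1) - g_{2-\al}(m-k))/h^\al$. Setting $j = m-k+1 \in \{2,\dots,m\}$, I need $g_{2-\al}(j) - g_{2-\al}(j-1) \ge (mh)^{-\al} h^\al /\Ga(1-\al) = m^{-\al}/\Ga(1-\al)$; and for $k=m$ (i.e. $j=1$), $1/\Ga(2-\al) = g_{2-\al}(1) - g_{2-\al}(0) \ge m^{-\al}/\Ga(1-\al)$. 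Since $g_{2-\al}' = g_{1-\al}$ and $g_{1-\al}$ is decreasing, the mean value theorem gives $g_{2-\al}(j) - g_{2-\al}(j-1) = g_{1-\al}(\xi)$ for some $\xi \in (j-1,j) \subset (0,m)$, hence $\ge g_{1-\al}(m) = m^{-\al}/\Ga(1-\al)$, which is exactly what is needed (and covers $j=1$ as well, with $\xi \in (0,1)$). This monotonicity-plus-mean-value step, together with the telescoping bookkeeping for the weights, is the heart of the argument; I expect the main obstacle to be purely organizational — keeping the index shifts in the double-sum rearrangement straight and handling the boundary indices $k=0$, $k=m$ correctly — rather than any genuine analytic difficulty.
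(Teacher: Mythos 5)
Your proposal is correct and follows essentially the same route as the paper: summing, swapping the order of summation to collect the coefficient of each $\|U_k\|_{L^2}^2$, telescoping $\sum_{n=k+1}^m C_{n,k}=\Ga(2-\al)(\psi(1)-\psi(m-k+1))$ and $\sum_{n=1}^m C_{n,0}=m^{1-\al}$, and then bounding the increments $j^{1-\al}-(j-1)^{1-\al}\ge(1-\al)m^{-\al}$ — your mean-value-theorem step with $g_{1-\al}$ decreasing is just the paper's concavity argument in disguise. The index bookkeeping and boundary cases $k=0,m$ are handled exactly as in the paper, so there is nothing to fix.
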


\begin{proof}
Note that for any $m \in \widetilde{M}$, we have 
\begin{align*}
& \sum_{n=1}^m \left\{ \|U_n\|_{L^2}^2 - \sum_{k=0}^{n-1} C_{n,k}\|U_k\|_{L^2}^2 \right\} \\
=&\,
\|U_m\|_{L^2}^2+ \sum_{k=1}^{m-1} \left(1 - \sum_{n=k+1}^{m} C_{n,k} \right) \|U_k\|_{L^2}^2 - \sum_{n=1}^{m} C_{n, 0} \|U_0\|_{L^2}^2.
\end{align*}
Moreover,
\begin{align*}
1 - \sum_{n=k+1}^{m} C_{n,k} &= 1 - \Ga(2-\al)(\psi(1) - \psi(m+1-k)) \\
&= \Ga(2-\al)\psi(m+1-k) 
= (m+1-k)^{1-\al} - (m-k)^{1-\al}.
\end{align*}
Due to the concavity of $r \mapsto r^{1-\al}$, we have, for all $  k= 1, 2, \cdots, m$, 
\begin{equation*}
(m+1-k)^{1-\al} - (m-k)^{1-\al} \geq m^{1-\al} - (m-1)^{1-\al} \geq (1-\al)m^{-\al}.
\end{equation*}
Also, note that
\begin{equation*}
\sum_{n=1}^{m} C_{n, 0} = m^{1-\al}.
\end{equation*}
Therefore,
\begin{align*}
& \sum_{n=1}^m \frac{1}{\Ga(2-\al)h^{\al}} \left\{ \|U_n\|_{L^2}^2 - \sum_{k=0}^{n-1} C_{n,k}\|U_k\|_{L^2}^2 \right\} \\
&= 
\frac{1}{\Ga(2-\al)h^{\al}} \left\{\|U_m\|_{L^2}^2+ \sum_{k=1}^{m-1} \left(1 - \sum_{n=k+1}^{m} C_{n,k} \right) \|U_k\|_{L^2}^2 \right\}
- \frac{1}{\Ga(2-\al)h^{\al}} \sum_{n=1}^{m} C_{n, 0} \|U_0\|_{L^2}^2 \\
&\geq \frac{(mh)^{-\al}}{\Ga(1-\al)} \sum_{k=1}^{m} \|U_k\|_{L^2}^2 
-\frac{(mh)^{1-\al}}{\Ga(2-\al)h} \|U_0\|_{L^2}^2.
\end{align*}
\end{proof}

\begin{Rem}
We note here that our result is a discrete analog of 
\begin{equation*}
\dis \int_0^t d_s^\al (\|u\|_{L^2}^2)(s) \,ds \geq \frac{t^{-\al}}{\Ga(1-\al)} \int_0^t \|u(s)\|_{L^2}^2 \,ds - \frac{t^{1-\al}}{\Ga(2-\al)}\|u(0)\|_{L^2}^2
\end{equation*}
 for all $u \in W^{1,1}(0, T; L^2(\Omega))$ 
 in \cite[Theorem 3.3]{KRY}. 
\end{Rem}

\begin{Th} \label{dis_energy_estimate}
Let $\{ U_k \}_{k \in \widetilde{M}}$ be the family of the functions given by (\ref{ini_app}) and (\ref{AppEq}). 
Then, we have
\begin{equation*}
h \left( \sum_{m=1}^M \|U_m\|_{L^2}^2 + \sum_{m=1}^M \|\na U_m\|_{L^2}^2 \right)
\leq C_T \left( \|U_0\|_{L^2}^2 + h \sum_{m=1}^M \|f_m\|_{L^2}^2 \right),
\end{equation*}
where we set $f_m(x) \coloneqq f(x, mh)$ for $x \in \Omega$ and $m \in \widetilde{M}$.
\end{Th}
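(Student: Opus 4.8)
The plan is to test the scheme \eqref{AppEq} against $U_m$ in $L^2(\Omega)$, sum over $m$, and use the discrete coercivity lemmas established above together with a discrete Gr\"onwall-type argument. First I would take the $L^2(\Omega)$-inner product of the identity
\[
\frac{1}{\Ga(2-\al)h^{\al}} \left\{ U_m - \sum_{k=0}^{m-1} C_{m,k}U_k \right\} + LU_m = f_m
\]
with $U_m$. By Lemma \ref{dtal_coer} the first term is bounded below by $\frac{1}{2\Ga(2-\al)h^{\al}}\{\|U_m\|_{L^2}^2 - \sum_{k=0}^{m-1}C_{m,k}\|U_k\|_{L^2}^2\}$, and by integration by parts (using $U_m\in H^2(\Omega)\cap C(\overline\Omega)$ with $U_m=0$ on $\pa\Omega$, established after \eqref{AppEq}) together with (A1), (A2) the second term satisfies $(LU_m,U_m)_{L^2}\ge\lambda\|\na U_m\|_{L^2}^2$. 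On the right-hand side I would use Cauchy--Schwarz and Young's inequality: $(f_m,U_m)_{L^2}\le\frac{1}{2}\|f_m\|_{L^2}^2+\frac12\|U_m\|_{L^2}^2$.

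Next I would multiply by $2h$ and sum over $m=1,\dots,N$ for an arbitrary $N\in\widetilde M\setminus\{0\}$. The key point is the telescoping-type estimate of Lemma \ref{int_dtal}, which gives
\[
\sum_{m=1}^N \frac{1}{\Ga(2-\al)h^{\al}}\Big\{\|U_m\|_{L^2}^2 - \sum_{k=0}^{m-1}C_{m,k}\|U_k\|_{L^2}^2\Big\}
\ge \frac{(Nh)^{-\al}}{\Ga(1-\al)}\sum_{k=1}^N\|U_k\|_{L^2}^2 - \frac{(Nh)^{1-\al}}{\Ga(2-\al)h}\|U_0\|_{L^2}^2 .
\]
Since $Nh\le T$, the factor $(Nh)^{-\al}$ is an unwanted small coefficient on the good term; however, discarding the nonnegative $\sum_{k=1}^N\|U_k\|_{L^2}^2$ part entirely when it is multiplied by a vanishing coefficient is fine, because the crucial coercive contribution for the Gr\"onwall argument comes from handling the cross terms differently. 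The cleaner route: keep $\frac{(Nh)^{-\al}}{\Ga(1-\al)}\sum_{k=1}^N\|U_k\|_{L^2}^2\ge0$, and also $2\lambda h\sum_{m=1}^N\|\na U_m\|_{L^2}^2$ and the gradient term, while moving $h\sum\|U_m\|_{L^2}^2$ from the right side; to absorb the latter one invokes the fact that $h\sum_{m=1}^N\|U_m\|_{L^2}^2$ is itself controlled — this is where a discrete Gr\"onwall inequality enters. Concretely, from the summed inequality one obtains, after collecting,
\[
2\lambda h\sum_{m=1}^N\|\na U_m\|_{L^2}^2 + \frac{2(Nh)^{-\al}}{\Ga(1-\al)}\sum_{m=1}^N\|U_m\|_{L^2}^2
\le \frac{2(Nh)^{1-\al}}{\Ga(2-\al)h}\cdot h\|U_0\|_{L^2}^2 \cdot\tfrac1h + h\sum_{m=1}^N\|f_m\|_{L^2}^2 + h\sum_{m=1}^N\|U_m\|_{L^2}^2,
\]
and then one bounds $h\sum_{m=1}^N\|U_m\|_{L^2}^2$ by a constant times $\|U_0\|_{L^2}^2 + h\sum_{m=1}^M\|f_m\|_{L^2}^2$ using the discrete fractional Gr\"onwall lemma applied to the sequence $a_N := \|U_N\|_{L^2}^2$, which satisfies a discrete inequality of the form $a_N \le$ (data) $+ C\sum_{k=1}^{N-1}C_{N,k}a_k$ coming from Lemma \ref{dtal_coer} before inserting the gradient term.

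I expect the \textbf{main obstacle} to be the bookkeeping in combining Lemma \ref{int_dtal} with a Gr\"onwall argument so that the factor $(Nh)^{-\al}$ does not degrade the final constant: one must first get an a priori $L^2$-bound on $\|U_m\|_{L^2}$ uniform in $m$ and $h$ (via Lemma \ref{dtal_coer}, the nonnegativity of $(LU_m,U_m)_{L^2}$, $\sum_k C_{m,k}=1$, and discrete Gr\"onwall), and only then feed that bound back into the summed estimate to extract the $h\sum\|\na U_m\|_{L^2}^2$ control. The elliptic and Caputo-monotonicity ingredients ($C_{m,k}\ge0$, $\sum_k C_{m,k}=1$, uniform ellipticity) are exactly what make both steps work, and with $T=Mh$ fixed all emerging constants depend only on $T$, $\al$, $\lambda$, and $\Omega$, giving the claimed bound with $C_T$ independent of $h$.
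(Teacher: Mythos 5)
Your first half is exactly the paper's argument: test \eqref{AppEq} against $U_m$, bound the discrete Caputo term from below via Lemma \ref{dtal_coer}, use (A1)--(A2) to get $\lambda\|\na U_m\|_{L^2}^2$, and sum using Lemma \ref{int_dtal}. But the way you close the estimate contains a genuine gap, and it stems from a misreading of the coefficient produced by Lemma \ref{int_dtal}. You call $(Nh)^{-\al}$ an ``unwanted small coefficient''; in fact, since $Nh\le T=Mh$, one has $(Nh)^{-\al}\ge T^{-\al}$, so this coefficient is bounded \emph{below} by the fixed positive constant $T^{-\al}/\Ga(1-\al)$ (and in the paper one simply sums over the full range $m=1,\dots,M$, where it equals exactly $T^{-\al}/\Ga(1-\al)$). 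Because of this, no Gr\"onwall argument is needed at all: the only reason your scheme cannot absorb the term $\tfrac12 h\sum_m\|U_m\|_{L^2}^2$ is that you fixed the Young constants at $\tfrac12,\tfrac12$. The paper instead keeps a free parameter, $(f_m,U_m)_{L^2}\le \tfrac{1}{2\ep}\|f_m\|_{L^2}^2+\tfrac{\ep}{2}\|U_m\|_{L^2}^2$, and then chooses $\ep=(1-\lambda)T^{-\al}/\Ga(1-\al)$, so that the good term $\tfrac{T^{-\al}}{\Ga(1-\al)}h\sum_m\|U_m\|_{L^2}^2$ coming from Lemma \ref{int_dtal} absorbs $\ep\, h\sum_m\|U_m\|_{L^2}^2$ directly, yielding the claimed bound with a constant depending only on $T$, $\al$, $\lambda$.

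As written, your route is incomplete: the ``discrete fractional Gr\"onwall lemma'' you invoke to recover $h\sum_m\|U_m\|_{L^2}^2$ is neither stated precisely nor proved, and it is not in the paper; proving it (via the structure $a_m\le \sum_{k}C_{m,k}a_k+\Ga(2-\al)h^{\al}(\ep^{-1}\|f_m\|_{L^2}^2+\ep a_m)$ and Mittag--Leffler-type bounds for the discrete kernel) is a nontrivial piece of work in its own right. Moreover, such a lemma would naturally produce a bound through $\max_m\|f_m\|_{L^2}^2$ rather than the weighted sum $h\sum_m\|f_m\|_{L^2}^2$ appearing in the statement, so extra bookkeeping would be needed to land on the asserted inequality. (Your displayed summed inequality also has inconsistent factors in the $\|U_0\|_{L^2}^2$ term.) The fix is simple: drop the Gr\"onwall detour, keep the $\ep$-Young inequality, sum up to $M$ so that $Mh=T$, and choose $\ep<T^{-\al}/\Ga(1-\al)$.
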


\begin{proof}
Fix $m \in \widetilde{M}\setminus\{0\}$. We have
\begin{equation} \label{diseq}
\dis \frac{1}{\Ga(2-\al)h^{\al}} \left\{ U_m - \sum_{k=0}^{m-1} C_{m,k} U_k \right\} + LU_m = f_m.
\end{equation}
Multiplying (\ref{diseq}) by $U_m$ and integrating on $\Omega$, by Lemma \ref{dtal_coer} and the uniform ellipticity of $L$, we get
\begin{align*}
&\dis \frac{1}{2\Ga(2-\al)h^{\al}} \left\{ \|U_m\|_{L^2}^2 - \sum_{k=0}^{m-1} C_{m,k}\|U_k\|_{L^2}^2 \right\}
+ \lambda \|\na U_m\|_{L^2}^2\\
\leq& \,
(f_m, U_m)_{L^2}
\le
\frac{1}{2\ep} \|f_m\|^2_{L^2} + \frac{\ep}{2}\|U_m\|^2_{L^2}
\end{align*}
for any $\ep > 0$.
Summing up on $m$, by Lemma \ref{int_dtal}, we arrive at 
\begin{align*}
& \frac{h}{\ep} \sum_{m=1}^{M} \|f_m\|^2_{L^2} + \ep h \sum_{m=1}^{M} \|U_m\|^2_{L^2} \\
& \geq \dis \frac{h}{\Ga(2-\al)h^{\al}} \sum_{m=1}^{M} \left\{ \|U_m\|_{L^2}^2 - \sum_{k=0}^{m-1} C_{m,k}\|U_k\|_{L^2}^2 \right\} + 2 \lambda h \sum_{m=1}^{M} \|\na U_m\|_{L^2}^2 \\
& \geq \frac{(Mh)^{-\al}}{\Ga(1-\al)} h \sum_{m=1}^{M} \|U_m\|_{L^2}^2 - \frac{(Mh)^{1-\al}}{\Ga(2-\al)} \|U_0\|_{L^2}^2 + 2 \lambda h \sum_{m=1}^{M} \|\na U_m\|_{L^2}^2,
\end{align*}
which implies
\begin{equation*}
\left( \frac{T^{-\al}}{\Ga(1-\al)} - \ep \right) h \sum_{m=1}^M \|U_m\|_{L^2}^2 + 2 \lambda h \sum_{m=1}^M \|\na U_m\|_{L^2}^2 \leq \frac{T^{1-\al}}{\Ga(2-\al)} \|U_0\|_{L^2}^2 + \frac{1}{\ep}h \sum_{k=1}^M \|f_m\|_{L^2}^2.
\end{equation*}
Take $\dis \ep \coloneqq \frac{(1-\lambda)T^{-\al}}{\Ga(1-\al)}$ to get 
\begin{equation*}
h \left( \sum_{m=1}^M \|U_m\|_{L^2}^2 + \sum_{m=1}^M \|\na U_m\|_{L^2}^2 \right)
\leq \frac{C_T}{\lambda} \left( \|U_0\|_{L^2}^2 + h \sum_{m=1}^M \|f_m\|_{L^2}^2 \right)
\end{equation*}
for some $C_T \geq 0$ independent of $h$. 
\end{proof}

\begin{Th} \label{energy_estimate}
Let $u_c^h$ and $u^h$ be the functions given by (\ref{PC}) and (\ref{PL}), respectively.
For any $\ep > 0$, there exists $h_0 > 0$ such that for all $h \in (0, h_0)$, we have
\begin{equation*}
\|u^h_c\|_{L^2(0, T; H^1(\Omega))} + \|u^h\|_{L^2(0, T; H^1(\Omega))}
\leq C_T ( \|u_0\|_{L^2} + \|f\|_{L^2(0, T; L^2(\Omega))} + \ep )
\end{equation*}
for some $C_T \geq 0$ which is independent of $\ep$ and $h$.
\end{Th}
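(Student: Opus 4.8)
The plan is to transfer the discrete energy estimate of Theorem \ref{dis_energy_estimate} to the two interpolated functions $u_c^h$ and $u^h$ by unwinding their definitions \eqref{PC} and \eqref{PL} and expressing the relevant parabolic norms as Riemann-type sums in $m$. For $u_c^h$ this is immediate: since $u_c^h(\cdot,t)=U_m$ on $[mh,(m+1)h)$, we have
\begin{equation*}
\|u_c^h\|_{L^2(0,T;H^1(\Omega))}^2=\sum_{m=0}^{M-1}\int_{mh}^{(m+1)h}\|U_m\|_{H^1(\Omega)}^2\,dt
= h\sum_{m=0}^{M-1}\|U_m\|_{H^1(\Omega)}^2,
\end{equation*}
so up to the single boundary term $h\|U_0\|_{H^1(\Omega)}^2$ (harmless, and controlled since $U_0^h$ is fixed smooth compactly supported and converges to $u_0$, so $\|U_0^h\|_{H^1}$ is bounded) this is exactly the left-hand side in Theorem \ref{dis_energy_estimate}. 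Hence $\|u_c^h\|_{L^2(0,T;H^1(\Omega))}^2\le C_T(\|U_0^h\|_{L^2}^2+h\sum_{m=1}^M\|f_m\|_{L^2}^2)+h\|U_0^h\|_{H^1}^2$.

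Next I would handle $u^h$. On $[mh,(m+1)h)$ the function $u^h(\cdot,t)$ is the affine interpolant between $U_m$ and $U_{m+1}$, hence a convex combination of the two, so pointwise in $t$ one has $\|u^h(\cdot,t)\|_{H^1}\le\max\{\|U_m\|_{H^1},\|U_{m+1}\|_{H^1}\}\le\|U_m\|_{H^1}+\|U_{m+1}\|_{H^1}$; integrating and summing gives
\begin{equation*}
\|u^h\|_{L^2(0,T;H^1(\Omega))}^2\le 2h\sum_{m=0}^{M}\|U_m\|_{H^1(\Omega)}^2,
\end{equation*}
which is again bounded by the same right-hand side (the extra index $m=M$ contributes $h\|U_M\|_{H^1}^2$, still inside the sum estimated in Theorem \ref{dis_energy_estimate} after trivially extending the range, or absorbed by noting $\|U_M\|_{H^1}^2$ appears on the left there). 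Alternatively, and perhaps more cleanly, one may use that $U_m\in H^2(\Omega)\cap C(\overline\Omega)$ and the bound $\|u^h\|^2_{L^2(0,T;H^1)}\le h\sum_m(\|U_m\|^2_{H^1}+\|U_{m+1}\|^2_{H^1})$ directly from the elementary inequality $\|(1-\theta)a+\theta b\|^2\le (1-\theta)\|a\|^2+\theta\|b\|^2$ for a Hilbert-space norm.

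Finally I would convert $\|U_0^h\|_{L^2}$ and $h\sum_{m=1}^M\|f_m\|_{L^2}^2$ back to the continuous data. By \eqref{ini_app}, $\sup_{\overline\Omega}|U_0^h-u_0|\to0$, so for $h$ small $\|U_0^h\|_{L^2}\le\|u_0\|_{L^2}+|\Omega|^{1/2}\ep$; this is where the $\ep$ and the threshold $h_0$ in the statement enter. For the forcing, $f\in C(\overline\Omega\times[0,T])$ is uniformly continuous, and $h\sum_{m=1}^M\|f(\cdot,mh)\|_{L^2}^2$ is a right-endpoint Riemann sum for $\int_0^T\|f(\cdot,t)\|_{L^2}^2\,dt=\|f\|_{L^2(0,T;L^2(\Omega))}^2$, so it converges to that integral as $h\to0$ and in particular is $\le\|f\|_{L^2(0,T;L^2(\Omega))}^2+\ep$ for $h$ small (uniform continuity gives an explicit modulus, or one simply invokes convergence of the Riemann sum of a continuous integrand). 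Combining the three ingredients, taking square roots and renaming constants yields the claimed bound. The only mild subtlety — the ``main obstacle,'' such as it is — is bookkeeping the boundary index terms ($m=0$ for $u_c^h$, $m=M$ for $u^h$) so that they are genuinely controlled by the fixed, convergent data rather than silently dropped; everything else is routine, since Theorem \ref{dis_energy_estimate} already does the real work.
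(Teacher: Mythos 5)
Your overall route is the same as the paper's: write the $L^2(0,T;H^1(\Omega))$ norms of $u_c^h$ and $u^h$ as sums of $h\|U_m\|_{H^1}^2$, invoke Theorem \ref{dis_energy_estimate}, and then for small $h$ replace $\|U_0^h\|_{L^2}$ by $\|u_0\|_{L^2}+\ep$ and the Riemann sum $h\sum_{m=1}^M\|f_m\|_{L^2}^2$ by $\|f\|_{L^2(0,T;L^2(\Omega))}^2+\ep$ using \eqref{ini_app} and the continuity of $f$; this is exactly where the paper's $\ep$ and $h_0$ enter as well. Your convexity bound $\|(1-\theta)U_m+\theta U_{m+1}\|_{H^1}^2\le(1-\theta)\|U_m\|_{H^1}^2+\theta\|U_{m+1}\|_{H^1}^2$ for the affine interpolant is a cleaner substitute for the paper's explicit cubic integration and yields the same bound $Ch\sum_m\|U_m\|_{H^1}^2$; the stray index $m=M$ is indeed harmless, since $\|U_M\|_{H^1}^2$ is part of the sum controlled by Theorem \ref{dis_energy_estimate}.

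The one genuine flaw is your treatment of the $m=0$ term $h\|U_0^h\|_{H^1}^2$. The claim that $\|U_0^h\|_{H^1}$ is bounded ``since $U_0^h$ is smooth, compactly supported and converges to $u_0$'' does not follow: \eqref{ini_app} constrains only $\sup_{\overline{\Omega}}|U_0^h-u_0|$, and $u_0$ is merely continuous, so nothing prevents $\|\nabla U_0^h\|_{L^2}$ from blowing up arbitrarily fast as $h\to0$ (add to any admissible $U_0^h$ a small-amplitude, high-frequency perturbation: uniform convergence is preserved while $h\|\nabla U_0^h\|_{L^2}^2$ can be made to diverge). Hence that boundary term is not controlled by $C_T(\|u_0\|_{L^2}+\|f\|_{L^2(0,T;L^2(\Omega))}+\ep)$ without further input. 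The paper sidesteps this by estimating only $\int_h^T\|u_c^h(t)\|_{H^1}^2\,dt$ (all sums start at $m=1$), i.e., the first subinterval is simply not included; if you insist on the full interval $(0,T)$ you must build an extra requirement into the choice of $U_0^h$, for instance mollify $u_0$ at a scale $\delta(h)\to0$ chosen slowly enough that $h\|\nabla U_0^h\|_{L^2}^2\to0$ --- this is compatible with \eqref{ini_app} but is an additional stipulation, not a consequence of it. With that fix (or with the paper's restriction of the time interval) the rest of your argument goes through.
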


\begin{proof}
Fix $\ep > 0$.
Noting that $u_c^h(t) = U_m$ for $mh \leq t < (m+1)h$, we have, by Theorem \ref{dis_energy_estimate},
\begin{align*}
\int_h^T \|u_c^h(t)\|_{H^1}^2 \,dt &= \sum_{m=1}^{M-1} \int_{mh}^{(m+1)h} \|U_m\|_{H^1}^2 \,dt 
= h \sum_{m=1}^{M-1} \|U_m\|_{H^1}^2 \\
& \leq C_T \left( \|U_0\|_{L^2}^2 + h \sum_{m=1}^M \|f_m\|_{L^2}^2 \right).
\end{align*}
Take $h_0 > 0$ small so that for $0 < h < h_0$,
\begin{align*}
\|U_0\|_{L^2}^2 &\leq \|u_0\|_{L^2}^2 + \frac{\ep}{2}, \quad
h \sum_{m=1}^M \|f_m\|_{L^2}^2 \leq \|f\|_{L^2(0, T; L^2(\Omega))}^2 + \frac{\ep}{2}.
\end{align*}
Thus, for all $h \in (0, h_0)$, we get
\begin{equation*}
\|u^h_c\|_{L^2(0, T; H^1(\Omega))}^2 \leq C_T ( \|u_0\|_{L^2}^2 + \|f\|_{L^2(0, T; L^2(\Omega))}^2 + \ep ).
\end{equation*}

Next, we give an estimate for $\|u^h\|_{L^2(0, T; H^1(\Omega))}$.
We have
\begin{align*}
\int_h^T \|u^h(t)\|_{H^1}^2 dt 
&=
\sum_{m=1}^{M-1} \int_{mh}^{(m+1)h} \left\|U_m + \frac{U_{m+1} - U_m}{h}(t - mh) \right\|_{H^1}^2 dt \\
& \leq \sum_{m=1}^{M-1} \int_{mh}^{(m+1)h} \left( \|U_m\|_{H^1} + \frac{\|U_{m+1} - U_m\|_{H^1}}{h}(t - mh) \right)^2 dt \\
& = \sum_{m=1}^{M-1} \frac{h}{3 \|U_{m+1} - U_m \|} \left\{ (\|U_m\|_{H^1} + \|U_{m+1} - U_m\|_{H^1})^3 - \|U_m\|_{H^1}^3 \right\} \\
& = \frac{h}{3} \sum_{m=1}^{M-1} \Big\{ (\|U_m\|_{H^1} + \|U_{m+1} - U_m\|_{H^1})^2  \\
& \quad \quad \quad \quad + (\|U_m\|_{H^1} + \|U_{m+1} - U_m\|_{H^1})\|U_m\|_{H^1} + \|U_m\|_{H^1}^2 \Big\} \\
& \leq C h \sum_{m=1}^{M} \|U_m\|_{H^1}^2
\end{align*}
for some $C \geq 0$.
By the same argument as above we get the conclusion.
\end{proof}

\section{Proof of Theorem \ref{Main_Th}}\label{sec:main-proof}
\begin{Lem} \label{error_weakconv}
Let $e^h$ be the function defined by (\ref{error_term}).
We have
\begin{equation*}
\int_0^T \int_{\Omega} e^h(x, t) \ph(x) \eta(t) dx dt \rightarrow 0 \quad \mbox{as} \, h \rightarrow 0
\end{equation*}
for all $\ph \in C_c^{\infty}(\Omega)$ and $\eta \in C_c^{\infty}(0, T)$.
\end{Lem}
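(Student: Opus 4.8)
The plan is to integrate the defining identity \eqref{error_term} for $e^h$ against $\varphi(x)\eta(t)$ over $\Omega\times(0,T)$ and to show that each of the three resulting groups of terms vanishes as $h\to0$. Recall that on $[mh,(m+1)h)$ we have
\[
e^h(t) = \bigl(d_t^{\al}u^h(t) - d_t^{\al}u^h(mh)\bigr) + \bigl(Lu^h(t) - Lu^h(mh)\bigr) - \bigl(f(t) - f(mh)\bigr).
\]
The third group is the easiest: since $f\in C(\overline{\Omega}\times[0,T])$ is uniformly continuous, $\sup_{x,\,|t-mh|<h}|f(x,t)-f(x,mh)|\to0$, so $\bigl|\int_0^T\int_\Omega (f(t)-f(mh))\varphi\eta\bigr|\le \|\varphi\|_{L^1}\|\eta\|_{L^1}\,\omega_f(h)\to0$, where $\omega_f$ is the modulus of continuity of $f$.

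For the second group, I would not differentiate $u^h$ in $x$ directly (we only control $\nabla U_m$ in $L^2$, by Theorem \ref{dis_energy_estimate}). Instead, since $\varphi\in C_c^\infty(\Omega)$, I move $L$ onto the test function: $\int_\Omega (Lu^h(t)-Lu^h(mh))\varphi\,dx = \sum_{i,j}\int_\Omega a_{ij}(x)\,\partial_{x_j}\!\bigl(u^h(t)-u^h(mh)\bigr)\partial_{x_i}\varphi\,dx$, and then integrate by parts once more in $x$ to land on $L^*\varphi\in C_c^\infty(\Omega)$ (here (A3) is used so that $a_{ij}\in C^{1,1}$ makes $L^*\varphi$ well defined and bounded). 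Then the integrand is $\bigl(u^h(x,t)-u^h(x,mh)\bigr)\,L^*\varphi(x)$, and for $t\in[mh,(m+1)h)$ we have, by the definition \eqref{PL}, $u^h(x,t)-u^h(x,mh) = \frac{t-mh}{h}\,(U_{m+1}(x)-U_m(x))$, whose sup-norm is bounded by $\|U_{m+1}-U_m\|_\infty$. By Lemma \ref{Tonari}, for $h$ small this is $<\ep$ uniformly in $m$, so the whole spatial integral is bounded by $\ep\,\|L^*\varphi\|_{L^1}$, uniformly in $t$; multiplying by $\eta$ and integrating in $t$ gives a bound $C\|\eta\|_{L^1}\|L^*\varphi\|_{L^1}\,\ep$. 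Since $\ep>0$ is arbitrary, this group tends to $0$.

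The first group is the heart of the argument and the place where Sections \ref{sec:KA} and the kernel approximation are essential. By \eqref{eq:G}, $d_t^\al u^h(t) - d_t^\al u^h(mh) = \frac{d}{dt}\bigl(G[u^h]-G^h[u^h]\bigr)(t)$ on each interval $(mh,(m+1)h)$. Since $G[u^h]-G^h[u^h]$ is (for each $x$) continuous on $[0,T]$ and piecewise $C^1$, and $\eta$ has compact support in $(0,T)$ with $\eta(t)=0$ on $[0,h]\cup[T-h,T]$ for $h$ small, I integrate by parts in $t$ over $(0,T)$: the boundary terms at the breakpoints $mh$ telescope and cancel because $G[u^h]-G^h[u^h]$ is continuous across them (there is no jump), leaving
\[
\int_0^T\!\!\int_\Omega \bigl(d_t^\al u^h(t)-d_t^\al u^h(mh)\bigr)\varphi\eta\,dx\,dt
= -\int_0^T\!\!\int_\Omega \bigl(G[u^h]-G^h[u^h]\bigr)(x,t)\,\varphi(x)\,\eta'(t)\,dx\,dt.
\]
Now apply Lemma \ref{errorweak} with $\eta$ replaced by $\eta'$ (so $\|\eta\|_\infty,\|\eta'\|_\infty$ get replaced by $\|\eta'\|_\infty,\|\eta''\|_\infty$, both finite since $\eta\in C_c^\infty$): the right-hand side is bounded by $C_T\|\varphi\|_{L^1}(\|\eta'\|_\infty+\|\eta''\|_\infty)\,\ep$ for all $h<h_0(\ep)$. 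Since $\ep$ is arbitrary, this group also tends to $0$. Combining the three groups yields the claim.

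\medskip
The main obstacle is the first group: one must know in advance that $d_t^\al u^h(mh)$ has a primitive of the form $\frac{d}{dt}G^h[u^h]$ with $G^h[u^h]$ continuous in $t$ (property (b) and the continuity built into Lemma \ref{kerap}), so that the integration by parts in $t$ produces no spurious boundary contributions at the grid points, and then that $G[u^h]-G^h[u^h]$ is small in the weak sense quantified by Lemma \ref{errorweak} — which in turn rests on the $O(h)$ kernel estimate (c) together with the uniform smallness of the increments $\|U_{m+1}-U_m\|_\infty$ from Theorem \ref{convisPC}. Everything else is a routine uniform-continuity or integration-by-parts estimate.
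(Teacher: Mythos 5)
Your proof is correct, and for the Caputo term it is essentially the paper's argument: both use \eqref{eq:G} to integrate by parts in $t$ (the continuity of $G[u^h]-G^h[u^h]$ across the grid points, which you check explicitly and the paper uses implicitly, is what kills the boundary terms) and then invoke Lemma \ref{errorweak} with $\eta'$ in place of $\eta$, yielding the bound $C_T\|\varphi\|_{L^1}(\|\eta'\|_{\infty}+\|\eta''\|_{\infty})\ep$. Where you genuinely differ is the elliptic term. The paper integrates by parts only once in $x$ and then argues that $\partial_{x_j}u^{h_n}(\cdot,t)-\partial_{x_j}u^{h_n}(\cdot,mh)$ tends weakly to $0$, which rests on the energy estimate of Theorem \ref{energy_estimate}, weak compactness in $L^2(0,T;H_0^1(\Omega))$, and the identification of the weak limit with $u$ through uniform convergence (a subsequence argument). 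You instead move both derivatives onto the test function, using (A3) so that $L\varphi$ is bounded with compact support, and then exploit the pointwise identity $u^h(\cdot,t)-u^h(\cdot,mh)=\frac{t-mh}{h}(U_{m+1}-U_m)$ together with Lemma \ref{Tonari}, getting the bound $\ep\,\|\eta\|_{L^1}\|L\varphi\|_{L^1}$. This is more elementary: it makes the lemma independent of the Section \ref{sec:EE} energy estimates and avoids the subsequence and limit-identification step (those estimates are of course still needed later in the proof of Theorem \ref{Main_Th} to get $u\in L^2(0,T;H_0^1(\Omega))$ and to pass to the limit in the weak form). Your treatment of the $f$-term by uniform continuity is the same as the paper's, just spelled out.
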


\begin{proof}
Let $\ph \in C_c^{\infty}(\Omega)$ and $\eta \in C_c^{\infty}(0, T)$.
We have
\begin{align*}
& \left|\int_0^T \int_{\Omega} e^h(x, t) \ph(x) \eta(t) \,dx dt\right| \\
& \le 
\left|\int_0^T \int_{\Omega} \left( d_t^{\al}u^h(t) - d_t^{\al}u^h(mh) \right) \ph(x) \eta(t) \,dx dt\right| \\
& +\left| \int_0^T \int_{\Omega} \left( \partial_{x_i}(a_{i, j}(x)\partial_{x_j}u^h(t)) - \partial_{x_i}(a_{ij}(x)\partial_{x_j}u^h(mh)) \right) \ph(x) \eta(t) \,dx dt\right| \\
& +\left| \int_0^T \int_{\Omega} \left( f(t) - f(mh) \right) \ph(x) \eta(t) \,dx dt\right| \revcoloneqq I_1 + I_2 + I_3.
\end{align*}
By (\ref{eq:G}) and Lemma \ref{errorweak}, for any $\ep > 0$, there exists $h_0 > 0$ such that
\begin{equation*}
I_1 =\left| - \int_0^T \int_{\Omega} \left( G[u^h](t) - G^h[u^h](t) \right) \ph(x) \eta^{\prime}(t) \,dx dt\right| 
\leq C_T \|\ph\|_{L^1} (\|\eta^{\prime}\|_{\infty} + \|\eta^{\prime\prime}\|_{\infty}) \ep
\end{equation*}
for all $h \in (0, h_0)$.
In light of Theorem \ref{energy_estimate} and the weak compactness, there exists a subsequence 
$\{u^{h_n} \}_{n \in \N}$ such that 
\begin{equation*}
u^{h_n} \rightharpoonup v \quad \mbox{in} \quad L^2(0,T;H_0^1(\Omega)) 
\quad{as} \ n\to\infty 
\end{equation*}
for $v \in L^2(0,T;H_0^1(\Omega))$.
Noting that $u^h \rightarrow u$ in $C(\overline{\Omega} \times [0, T])$ as $h \rightarrow 0$, 
by Theorem \ref{convisPC} and \eqref{conv:uh}
we have $u = v$ on $\overline{\Omega} \times [0, T]$.
Therefore,
\begin{equation*}
I_2 = \left|-\int_0^T \int_{\Omega} a_{ij}(x) \left( \partial_{x_j} u^{h_n}(x, t) - \partial_{x_j} u^{h_n}(x, mh) \right) \partial_{x_i} \ph(x) \eta(t) \,dx dt\right| \rightarrow 0
\end{equation*}
as $n \rightarrow +\infty$. 
Also, $I_3 \rightarrow 0$ as $h \rightarrow 0$.

Therefore,
\begin{equation*}
\varlimsup_{h \to 0} \left|\int_0^T \int_{\Omega} e^{h}(x, t) \ph(x) \eta(t) \,dx dt\right| \leq C_T \|\ph\|_{L^1} (\|\eta^{\prime}\|_{\infty} + \|\eta^{\prime\prime}\|_{\infty}) \ep
\end{equation*}
for all $\ep > 0$, which implies the conclusion.
\end{proof}

\begin{proof}[Proof of Theorem {\rm\ref{Main_Th}}]
We first assume that $u$ is the unique viscosity solution to (\ref{CP}).
Let $u_c^h$ and $u^h$ be given by (\ref{PC}) and (\ref{PL}) respectively.
By Theorem \ref{convisPC} and \eqref{conv:uh}, we have
\begin{equation*}
u_c^h, u^h \rightarrow u \quad \mbox{uniformly on} \quad \overline{\Omega} \times [0,T] \quad \mbox{as} \quad h \rightarrow 0.
\end{equation*}
We prove that $u$ is the distributional solution to (\ref{RLP}), i.e.,
\begin{equation*}
u \in L^2(0,T;H_0^1(\Omega)), \, \,  g_{1-\al} \ast (u-u_0) \in {}_{0}H^1(0,T;H^{-1}(\Omega)) 
\end{equation*}
and $u$ satisfies weak form \eqref{wf}.

By the same argument as in the proof of Lemma \ref{error_weakconv}, we have
\begin{equation*}
u_c^{h_n} \rightharpoonup u \quad \mbox{in} \quad L^2(0,T;H_0^1(\Omega)) \quad \mbox{as} \quad n \rightarrow +\infty
\end{equation*}

Next, we prove that $u$ satisfies weak form of \eqref{wf}. 
By Proposition \ref{u^h_eq}, we have
\begin{equation} \label{AE}
D_t^{\al}(u^h-U_0^h)(t) + L u^h(t) = f(t) + e^h(t) \quad \mbox{for} \quad (mh, (m+1)h) \quad \mbox{and} \quad m \in \widetilde{M}.
\end{equation}
Multiplying (\ref{AE}) by $\ph \in C_c^{\infty}(\Omega)$ and $\eta \in C_c^{\infty}(0,T)$, and integrating on $\Omega \times (0,T)$, we get
\begin{equation*}
\dis \int_0^T \int_{\Omega} \frac{\pa}{\pa t}(g_{1-\al} \ast (u^h-U_0^h)) \eta \ph \,dx dt
+ \int_0^T \int_{\Omega} L u^h \cdot \eta \ph \,dx dt
= \dis \int_0^T \int_{\Omega} (f + e^h) \eta \ph \,dx dt.
\end{equation*}
Integrating by parts, we get
\begin{align*}
\dis \int_0^T \int_{\Omega} \frac{\pa}{\pa t}(g_{1-\al} \ast (u^h-U_0^h)) & \eta \ph \,dx dt
= \dis - \int_0^T \int_{\Omega} (g_{1-\al} \ast (u^h-U_0^h)) \eta^{\prime} \ph \,dx dt \\
&\rightarrow \dis - \int_0^T \int_{\Omega} (g_{1-\al} \ast (u-u_0)) \eta^{\prime} \ph \,dx dt \quad \mbox{as} \, h \rightarrow 0,
\end{align*}
since we have $u^h \rightarrow u$ uniformly on $\overline{\Omega} \times [0, T]$.
Also, noting that $u^h \rightharpoonup u$ in $L^2(0,T;H_0^1(\Omega))$ as $h \rightarrow 0$, we have
\begin{align*}
- \int_0^T \int_{\Omega} \partial_{x_i}(a_{ij}(x) \partial_{x_j} u^h) \cdot \eta \ph \,dx dt
&= \int_0^T \int_{\Omega} a_{ij}(x) \partial_{x_j} u^h \partial_{x_i} \ph \cdot \eta \,dx dt \\
&\rightarrow \int_0^T \int_{\Omega} a_{ij}(x) \partial_{x_j} u \partial_{x_i} \ph \cdot \eta \,dx dt
\quad \mbox{as} \, h \rightarrow 0. 
\end{align*}
Here, we use the Einstein summation convention. By Lemma \ref{error_weakconv}, we have
\begin{equation*}
\dis \int_0^T \int_{\Omega} e^h \eta \ph \,dx dt \quad \mbox{as} \, h \rightarrow 0.
\end{equation*}
Therefore, we obtain
\begin{equation*} 
\dis - \int_0^T \eta^{\prime} \int_{\Omega} (g_{1-\al} \ast (u-u_0)) \ph \,dx dt
+ \int_0^T \eta \int_{\Omega} a_{ij}(x) \partial_{x_j} u \partial_{x_i} \ph \,dx dt
= \int_0^T \eta \int_{\Omega} f \ph \,dx dt, 
\end{equation*}
which implies \eqref{wf}. 

Finally, we prove that $g_{1-\al} \ast (u-u_0) \in {}_{0}H^1(0,T;H^{-1}(\Omega))$.
By Young's inequality, we have
\begin{equation*}
\|g_{1-\al} \ast (u-u_0)\|_{L^2(0,T;H^{-1}(\Omega))}
\leq \|g_{1-\al}\|_{L^1(0,T)} \|u-u_0\|_{L^2(0,T;H^{-1}(\Omega))}
< \infty.
\end{equation*}
Also, by \eqref{wf}, we have
\begin{align*}
\dis \left\|\frac{d}{dt} [g_{1-\al} \ast (u-u_0)](t) \right\|_{H^{-1}(\Omega)}
&= \sup_{\|v\|_{H_0^1(\Omega)} = 1} \left| \int_{\Omega} \frac{d}{dt} [g_{1-\al} \ast (u-u_0)](t) v \,dx \right| \\
&= \sup_{\|v\|_{H_0^1(\Omega)} = 1} \left| - \int_{\Omega} a_{ij}(\cdot) \partial_{x_j} u(\cdot, t) \partial_{x_i} v \,dx + \int_{\Omega} fv \,dx \right| \\
&\leq \|a_{ij}\|_{L^{\infty}(\Omega)} \|u(t)\|_{H^1({\Omega})} + \|f(t)\|_{L^2({\Omega})}<\infty 
\end{align*}
for almost every  $t \in (0,T)$.  
Thus, we get $ \frac{d}{dt} [g_{1-\al} \ast (u-u_0)] \in L^2(0,T; H^{-1}(\Omega)) $.
Moreover, using $(u-u_0) \in C(\overline{\Omega} \times [0,T])$, we have
\begin{align*}
\|(g_{1-\al} \ast (u-u_0))(t)\|_{H^{-1}(\Omega)}
&= \dis \sup_{\|v\|_{H_0^1(\Omega)} = 1} \int_{\Omega} \left( \int_0^t g_{1-\al}(t-s) (u(x,s) - u_0(x)) \,ds \right) v(x) \,dx \\
&\leq  \|u-u_0\|_{L^{\infty}(\Omega \times (0,T))}g_{2-\al}(t)
\rightarrow 0
\end{align*}
as $t \rightarrow 0$. 
Thus, we obtain $g_{1-\al} \ast (u-u_0) \in {}_{0}H^1(0,T;H^{-1}(\Omega))$.
Therefore, $u$ is a distributional solution of (\ref{RLP}).

Conversely, assume that $u$ is the distributional solution to \eqref{RLP}.
Let  $\tilde{u} \in C(\overline{\Omega} \times [0,T])$ be the unique viscosity solution to (\ref{CP}).
As proved in the above, $\tilde{u}$ is also a distributional solution of (\ref{RLP}).
The uniqueness of distributional solutions to (\ref{RLP}) implies $u = \tilde{u}$ a.e. on $\Omega \times (0,T)$, that is, $u$ admits only one continuous representative $\tilde{u}$ on $\overline{\Omega} \times [0,T]$.
This completes the proof.
\end{proof}

%

\end{document}